\theoremstyle{plain}
\newtheorem{theorem}{Theorem}[section]
\crefname{theorem}{Theorem}{Theorems}
\newtheorem{proposition}[theorem]{Proposition}
\crefname{proposition}{Proposition}{Propositions}
\newtheorem{corollary}[theorem]{Corollary}
\crefname{corollary}{Corollary}{Corollaries}
\newtheorem{lemma}[theorem]{Lemma}
\crefname{lemma}{Lemma}{Lemmas}
\newtheorem{conjecture}[theorem]{Conjecture}
\crefname{conjecture}{Conjecture}{Conjectures}
\crefname{problem}{Problem}{Problem}
\newtheorem{claim}{Claim}[theorem]
\newtheorem*{claim*}{Claim}
\newenvironment{claimproof}[1][Proof]{\par
	\pushQED{\qed}%
	
	\normalfont \topsep6\p@\@plus6\p@\relax
	\trivlist
	\item[\hskip\labelsep
	\textit{#1}\@addpunct{.}~]\ignorespaces
}{%
	\popQED\endtrivlist\@endpefalse
}
\crefname{observation}{Observation}{Observations}
\crefname{setup}{Setup}{Setups}
\crefname{fact}{Fact}{Facts}
\crefname{algorithm}{Algorithm}{Algorithms}
\crefname{remark}{Remark}{Remarks}
\crefname{example}{Example}{Examples}
\theoremstyle{definition}
\newtheorem{definition}[theorem]{Definition}
\crefname{definition}{Definition}{Definitions}
\crefname{construction}{Construction}{Constructions}
\newtheorem{question}[theorem]{Question}
\crefname{question}{Question}{Questions}
\numberwithin{equation}{section}
\setlist[enumerate,1]{label={\upshape (\roman*)}}
\definecolor{DarkDesaturatedBlue}{HTML}{3A3556}
\definecolor{VividOrange}{HTML}{F15918}
\definecolor{PureOrange}{HTML}{FFBA00}
\definecolor{LightGrayishPink}{HTML}{EEC5D5}
\definecolor{VerySoftBlue}{HTML}{B5AFDB}
\tikzset{snake it/.style={decorate, decoration=snake}}
\definecolor{DarkDesaturatedBlue}{HTML}{3A3556}
\definecolor{VividOrange}{HTML}{F15918}
\definecolor{PureOrange}{HTML}{FFBA00}
\definecolor{LightGrayishPink}{HTML}{EEC5D5}
\definecolor{VerySoftBlue}{HTML}{B5AFDB}
\newcounter{propcounter}
\begin{document}
	\title{Spanning clique subdivisions in pseudorandom graphs}
	\author{Hyunwoo Lee \thanks{Department of Mathematical Sciences, KAIST and Extremal Combinatorics and Probability Group (ECOPRO), Institute for Basic Science (IBS), Daejeon, South~Korea. Supported by the National Research Foundation of Korea (NRF) grant funded by the Korea government(MSIT) No. RS-2023-00210430, and the Institute for Basic Science (IBS-R029-C4). Email: \texttt{hyunwoo.lee@kaist.ac.kr}}
    \and Mat\'ias Pavez-Sign\'e\thanks{Supported by ANID-FONDECYT Regular grant No. 1241398, by ANID Basal Grant CMM FB210005, and by the European Research Council (ERC) under the European Union Horizon 2020 research and innovation programme (grant agreement No. 947978). Department of Mathematical Engineering and Center for Mathematical Modeling  (CNRS IRL2807), University of Chile, Santiago, Chile. Email: \texttt{mpavez@dim.uchile.cl}}\and Teo Petrov\thanks{Mathematics Institute, University of Warwick, Coventry, United Kingdom. Supported by the Warwick Mathematics Institute Centre for Doctoral Training, and by funding from the UK EPSRC (Grant number: EP/W524645/1). Email: \texttt{teodor.petrov@warwick.ac.uk}}}
	
	\maketitle
	
	
	\begin{abstract}
		In this paper, we study the appearance of a spanning subdivision of a clique in graphs satisfying certain pseudorandom conditions. Specifically, we show the following results. 
		\begin{enumerate}
        	
            	\item  There are constants $C>0$ and $c\in (0,1]$ such that, whenever $d/\lambda\ge C$, every $(n,d,\lambda)$-graph contains a spanning subdivision of $K_t$ for all  $2\le t \le \min\{cd,c\sqrt{\frac{n}{\log n}}\}$.
			\item  There are constants $C>0$ and $c\in (0,1]$ such that, whenever $d/\lambda\ge C\log^3n$, every $(n,d,\lambda)$-graph contains a spanning nearly-balanced subdivision of $K_t$ for all  $2\le t \le \min\{cd,c\sqrt{\frac{n}{\log^3n}}\}$.
		\item For every $\mu>0$, there are constants $c,\varepsilon\in (0,1]$ and $n_0\in \mathbb N$ such that, whenever $n\ge n_0$, every $n$-vertex graph with minimum degree at least $\mu n$ and no bipartite holes of size $\varepsilon n$ contains a spanning nearly-balanced subdivision of $K_t$ for all $2\le t \le c\sqrt{n}$.
		
		\end{enumerate}
	\end{abstract}
	
	
	\section{Introduction}\label{sec:intro}
	
	Given a graph $H$, we say that a graph $H'$ is a \emph{subdivision} of $H$ (or an \emph{$H$-subdivision}) if $H'$ is obtained by replacing one or more edges from $H$ with vertex-disjoint paths. In this paper, we will seek simple conditions on a graph $G$ that force the containment of a subdivision of the complete graph on $t$ vertices, which is denoted by $K_t$. This problem has been studied since the 60s, when Mader~\cite{MADER1967} proved that there is a function $f(t)$ such that every graph with an average degree at least $f(t)$ contains a $K_t$-subdivision. After this influential result, many researchers have been looking for optimal degree conditions forcing the containment of $K_t$-subdivisions. For example, Bollob\'as and Thomason~\cite{BOLLOBAS1998883} and Koml\'os and Szemer\'edi~\cite{KomlosSzemeredi_subdivisions} showed that one can take $f(t)=\Theta(t^2)$ so that every graph with average degree at least $f(t)$ contains a $K_t$-subdivision, settling a conjecture of Mader~\cite{MADER1967} and of Erd\H os and Hajnal~\cite{erdos1964complete}. Another direction of this problem is to ask for the containment of a $K_t$-subdivision which is also \emph{balanced}, where a balanced subdivision of $K_t$ is a graph obtained by replacing every edge of $K_t$ with vertex-disjoint paths of the same length. Solving a conjecture of Thomassen~\cite{Thomassen1984}, Liu and Montgomery~\cite{liu2023solution} proved that having a large average degree is enough to force the containment of a balanced $K_t$-subdivision. Shortly after, Luan, Tang,
	Wang, and Yang~\cite{luan2023balanced} and Gil Fern\'andez, Hyde, Liu, Pikhurko, and Wu~\cite{fernandez2023disjoint} independently proved that average degree  $\Omega(t^2)$ is enough to contain a balanced $K_t$-subdivision.
	
	When the host graph is dense, one typically expects to find even larger subdivisions and thus it is natural to ask for a Dirac-type result for \textit{spanning subdivisions}. Given a graph $H$, one aims to find the smallest number $\delta>0$ such that every $n$-vertex graph $G$ with minimum degree $\delta(G)\ge \delta n$ contains an $H$-subdivision using all the vertices of $G$. For instance, a spanning $K_3$-subdivision is just a Hamilton cycle and thus Dirac's theorem~\cite{D1952} tells us that, for $n\ge 3$, minimum degree at least $\frac{n}{2}$ is enough. In general, for $t=O(\sqrt{n})$, Pavez-Sign\'e~\cite{pavezsigne_2023} proved that every $n$-vertex graph with minimum degree at least $(1+o(1))\frac{n}{2}$ contains a spanning $K_t$-subdivision, which is also nearly-balanced \footnote{An $H$-subdivision is nearly-balanced if there is positive integer $\ell$ so that each path between branch vertices has length between $\ell$ and $\ell+1$.}. See~\cite{cheng2024spanning,lee2023spanning,mycroft2020trees} for similar results in the directed setting.
    
    In this paper, we will look for \textit{pseudorandom properties} in a graph that force the containment of a subdivision of the complete graph which is also spanning. 
 	
	\subsection{$(n, d, \lambda)$-graphs}
	Informally speaking, we say that a graph $G$ is \emph{pseudorandom} if $G$ resembles certain properties that appear with high probability\footnote{We say that the binomial random graph $G(n,p)$ satisfies a property $\mathcal P$ with high probability (w.h.p.) if $\mathbb P(G(n,p)\in\mathcal P)=1-o(1)$ when $n$ goes to infinity.} in a random graph. A convenient way to define pseudorandomness in graphs is via a spectral gap condition, as follows. We say an $n$-vertex graph $G$ is an $(n,d,\lambda)$-graph if $G$ is $d$-regular and all the non-trivial eigenvalues of the adjacency matrix of $G$ are bounded (in absolute value) by $\lambda$. The well-known Expander Mixing Lemma states that the distribution of the edges of an $(n,d,\lambda)$-graph $G$ is controlled by $\lambda$, and thus the smaller that $\lambda$ is, the closer to random that $G$ looks like. We recommend the survey of Krivelevich and Sudakov~\cite{krivelevich2006pseudo} for a detailed exposition about pseudorandom graphs.
	
	One of the central problems in pseudorandom graphs is the following: How large does $d$ need to be (with respect to $\lambda$) so that every $(n,d,\lambda)$-graph contains a copy of certain spanning graph $F$? This problem has shown to be quite challenging, as the only cases that are fully understood are perfect matchings~\cite{krivelevich2006pseudo}, triangle-factors~\cite{morrisfactors}, and only recently the Hamiltonicity problem was settled by Dragani\'c, Montgomery, Munh\'a Correia, Pokrovskiy and Sudakov~\cite{draganic2024hamiltonicity} who solved a central problem in the area posed by Krivelevich and Sudakov~\cite{KrivelevichHamilton} in 2003. A natural next step after Hamiltonicity is then to look for spanning trees, in which case Alon, Krivelevich and Sudakov~\cite{alon2007embedding} conjectured that for every $\Delta\ge 2$, there is some constant $C$ so that if $G$ is an $(n,d,\lambda)$-graph with $d/\lambda\ge C$, then it contains all $n$-vertex trees with maximum degree at most $\Delta$. This conjecture is true for trees with many leaves, as shown by Pavez-Sign\'e~\cite{pavez2023spanning}, but in general is wide-open, as the current record is $d/\lambda=\Omega(\log^3 n)$ as proved by Hyde, Morrison, M\"uyesser and Pavez-Sign\'e~\cite{hyde2023spanning}. 
	
	Our first result is that $(n,d,\lambda)$-graphs with $d/\lambda\gg 1$ contain a spanning subdivision of the complete graph even for unbounded degrees.  
    
    \begin{theorem}\label{thm:pseudorandom:2}
        There exist constants $C>0$ and $0<c\leq1$ such that the following holds. If $G$ is an $(n,d,\lambda)$-graph with $d/\lambda\ge C$, then $G$ contains a spanning $K_t$-subdivision for all $2\le t\le \min\{cd,c\sqrt{\frac{n}{\log n}}\}$. 
    \end{theorem}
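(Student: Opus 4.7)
The plan is to apply the absorption method, adapted to the $(n,d,\lambda)$-setting where $d/\lambda$ is only a sufficiently large constant. Since $t\le c\sqrt{n/\log n}$, the $\binom{t}{2}$ paths of a spanning $K_t$-subdivision will have average length $\Theta(\log n)$, matching the typical diameter of an $(n,d,\lambda)$-graph with $d/\lambda$ bounded, so short connecting paths (obtainable via the Expander Mixing Lemma) are the natural building blocks. First I would fix $t$ branch vertices $v_1,\dots,v_t$ with pairwise well-separated, ``typical'' neighbourhoods---feasible since $t\le cd$ and a random choice works with high probability---and then realize the subdivision in two phases: first a near-spanning skeleton containing many local absorbing gadgets, then an absorption step that sweeps up the few leftover vertices.

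For the skeleton, I would construct a $K_t$-subdivision $\mathcal H$ with branch vertices $v_1,\dots,v_t$ and $\binom{t}{2}$ internally vertex-disjoint paths $P_{ij}$ between $v_i$ and $v_j$ whose vertex set covers all but at most $\eta n$ vertices of $V(G)$, for some fixed small $\eta>0$. Each $P_{ij}$ should be decorated with many local absorbing gadgets: short segments $a$-$w$-$b$ along $P_{ij}$ such that for a pre-specified external vertex $x$ one has $x\in N(a)\cap N(b)$, so that the segment can be rerouted as $a$-$x$-$w$-$b$, absorbing $x$. The near-spanning skeleton would be built iteratively via the connecting lemma in $(n,d,\lambda)$-graphs: starting from short paths between each pair of branch vertices, a path-extension argument combined with repeated applications of the Expander Mixing Lemma lets one enlarge the paths until the uncovered set has shrunk to size $\le\eta n$, while placing gadgets along the way. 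Since every potential leftover vertex $x$ has $\Omega(d)$ neighbours in $V(\mathcal H)$ by the Expander Mixing Lemma, a Hall-type matching argument then lets one simultaneously absorb any remaining set of size $\le\eta n$ into the $P_{ij}$'s, producing the spanning $K_t$-subdivision.

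The main obstacle will be the simultaneous execution of near-spanning cover and gadget placement under the tight budget $d/\lambda=O(1)$. Because $d/\lambda$ is merely a constant, I cannot afford any polylogarithmic slack in the gadget count or the path lengths, so every edge-distribution estimate must go directly through the Expander Mixing Lemma rather than through the looser concentration one would obtain from random partitions; in particular, the sum of the $\binom{t}{2}$ path lengths has to hit $n-t$ up to an additive $O(\eta n)$, which leaves very little slack. Balancing gadget density, disjointness across all $\binom{t}{2}=\Theta(n/\log n)$ pairs, and compatibility with the near-spanning cover step is where the main technical work will lie; once the skeleton $\mathcal H$ with its embedded gadgets is built, the final absorption step is standard.
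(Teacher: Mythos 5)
Your proposal takes a genuinely different route from the paper, and it has gaps that make it unlikely to go through in the form described.

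The paper does not use absorption for this theorem. Instead, after randomly splitting $V(G)$ into three parts, it plants $t$ vertex-disjoint stars for the branch vertices, and then greedily builds $\binom{t}{2}-1$ of the branching paths with \emph{prescribed} lengths using the Friedman--Pippenger extendability framework (\Cref{lemma:connecting}), choosing the lengths in advance so that the unused vertices form a set $Z'$ of size roughly $pn$. The decisive step is then to show that $G[Z']$ is a $C'$-expander and to invoke the recent Hamilton-connectedness theorem of Dragani\'c, Montgomery, Munh\'a Correia, Pokrovskiy and Sudakov (\Cref{lemma:hamilton connected}) to route the final branching path through \emph{all} of $Z'$. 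This sidesteps any need for absorption: the last path soaks up every leftover vertex in one stroke, and no gadget bookkeeping is required. The price is that one path is much longer than the others, which is exactly why the balanced version (\Cref{thm:pseudorandom:1}) needs additional machinery.

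Your plan, by contrast, proposes a near-spanning skeleton decorated with local absorbing gadgets plus a Hall-type clean-up. Several things do not work as written. First, your gadget itself is inconsistent: if the segment is $a$-$w$-$b$ and you only know $x\in N(a)\cap N(b)$, the reroute $a$-$x$-$w$-$b$ needs the edge $xw$, which you do not have; and the alternative reroute $a$-$x$-$b$ drops $w$. Second, and more seriously, pre-specifying an external vertex $x$ per gadget and then absorbing a leftover set via Hall's theorem presupposes that the leftover set is contained in the set of pre-specified vertices and that the bipartite gadget-availability graph satisfies Hall's condition robustly; making this work with $d/\lambda=O(1)$ is exactly where one normally needs either a Montgomery-style template (as the paper does in \Cref{thm:main}, but there under much stronger $\varepsilon n$-joinedness and linear minimum degree) or polylogarithmic slack in $d/\lambda$ to get the degree concentration in reservoirs. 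You acknowledge the slack is tight but do not resolve it. Third, your near-spanning skeleton step (``a path-extension argument combined with repeated applications of the Expander Mixing Lemma'') is vague about the crucial accounting: with $\binom{t}{2}=\Theta(n/\log n)$ paths of average length $\Theta(\log n)$, the connecting lemma must deliver paths of \emph{exact} prescribed lengths summing to almost $n$, which is precisely what the extendability method gives and plain expander-mixing arguments do not. The key fact you seem unaware of --- that $C$-expanders are Hamilton-connected --- is what makes the clean, absorption-free proof possible, and without it your approach would face substantially more technical work than the paper's.
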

    
	A condition of the form $t\le cd$ is certainly needed, as each \textit{branch vertex} of a $K_t$-subdivision has degree $t-1$ and the host graph is $d$-regular. If we insist on the $K_t$-subdivision having each edge subdivided, then $t=O(\sqrt{n})$ is needed as each of the $\binom{t}{2}$ \textit{branch paths} adds at least one extra vertex. The log-factor in \Cref{thm:pseudorandom:2} is also needed, as there are celebrated constructions of $(n,d,\lambda)$-graphs with girth $\Omega(\log n)$ (see Example~11 in \cite{krivelevich2006pseudo}) showing that \Cref{thm:pseudorandom:2} is best-possible up-to the multiplicative constant.  
The only drawback in \Cref{thm:pseudorandom:2} is that one of the paths in the $K_t$-subdivision is notoriously longer than the rest of the branching paths. If one insists on all paths having roughly the same length, then a different approach is needed. To address this problem, we use the existence of \textit{sorting networks} in sparse expanders, showing that $d/\lambda\gg (\log n)^3$ suffices to guarantee the containment of a nearly-balanced subdivision, which is also spanning.  

\begin{theorem}\label{thm:pseudorandom:1}
    There exist constants $C>0$ and $0<c\leq1$ such that the following holds. If $G$ is an $(n,d,\lambda)$-graph with $d/\lambda\ge C\log^3n$, then $G$ contains a  spanning nearly-balanced $K_t$-subdivision for all $2\le t\le \min\{cd,c\sqrt{\frac{n}{\log^3n}}\}$.
\end{theorem}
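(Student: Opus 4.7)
The plan is to reduce the problem to $\binom{t}{2}$ essentially independent instances of finding Hamilton paths with prescribed endpoints in induced subgraphs that inherit the pseudorandomness of $G$. Define the target common path length $\ell:=\lfloor (n-t)/\binom{t}{2}\rfloor+1$; the hypothesis $t\le c\sqrt{n/\log^3 n}$ guarantees $\ell\ge C'\log^3 n$, with $C'$ arbitrarily large by choosing $c$ small.

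First I would pick $t$ branch vertices $v_1,\dots,v_t$ and randomly partition the remaining $n-t$ vertices into parts $V_{ij}$ ($1\le i<j\le t$), one per edge of $K_t$, with sizes differing by at most one. Combining the Expander Mixing Lemma with standard Chernoff bounds for random partitions, one shows that, with positive probability, simultaneously: (i) each induced subgraph $G[V_{ij}]$ is itself an $(|V_{ij}|,d',\lambda')$-graph with $d'/\lambda'\ge C''\log^3|V_{ij}|$; (ii) each branch vertex $v_i$ has $\Omega(d/t^2)$ neighbors in each $V_{ij}$; and (iii) by Hall's theorem, applicable since $t\le cd$, one may choose for each $i$ distinct representatives $u_{ij}\in N(v_i)\cap V_{ij}$ to serve as the entry points of $v_i$ into the $t-1$ branch paths incident to it. It then suffices to find, in each $G[V_{ij}]$, a Hamilton path from $u_{ij}$ to $u_{ji}$; concatenating with the edges $v_iu_{ij}$ and $u_{ji}v_j$ yields a $v_iv_j$-path in $G$ of length $|V_{ij}|+1$, and the union of these $\binom{t}{2}$ paths is a spanning nearly-balanced $K_t$-subdivision.

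The heart of the proof is thus a Hamilton-connectedness statement for $(n,d,\lambda)$-graphs whenever $d/\lambda\gg (\log n)^3$: any two vertices are joined by a Hamilton path. I would establish this by adapting the sorting-network-based Hamiltonicity argument of Dragani\'c, Montgomery, Munh\'a Correia, Pokrovskiy and Sudakov~\cite{draganic2024hamiltonicity}. Intuitively, embedding a sorting network of depth $O(\log n)$ inside the expander supplies a flexible gadget that allows one to reroute the initial and terminal segments of a Hamilton cycle so as to end at any prescribed pair of vertices, at the cost of the $(\log n)^3$ factor in the pseudorandomness hypothesis. Producing this endpoint-prescribed upgrade and verifying that the inherited parameters of each $G[V_{ij}]$ are still strong enough to invoke it is the main obstacle; the random partition and the Hall-type entry-point selection are then comparatively routine. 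In particular, in the extremal regime $t=\Theta(\sqrt{n/\log^3 n})$ one has $|V_{ij}|=\Theta(\log^3 n)$, so the inheritance must preserve the pseudorandomness ratio up to a constant factor, which is precisely the regime in which the Expander Mixing Lemma is tight.
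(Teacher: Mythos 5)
The central step in your proposal -- partitioning $V(G)\setminus\{v_1,\dots,v_t\}$ into $\binom{t}{2}$ near-equal parts $V_{ij}$ of size $\approx 2n/t^2$ and finding a Hamilton path inside each $G[V_{ij}]$ -- cannot work across the full range $2\le t\le\min\{cd,\,c\sqrt{n/\log^3 n}\}$, because the induced subgraphs $G[V_{ij}]$ are essentially empty whenever $t\gg\sqrt{d}$. For a random subset $S$ of size $m\approx 2n/t^2$, a vertex has expected degree $dm/n\approx 2d/t^2$ inside $S$; once $t>\sqrt{2d}$ this is below $1$, so a typical $G[V_{ij}]$ has fewer edges than vertices and certainly no Hamilton path, and moreover the branch vertices $v_i$ will typically have \emph{no} neighbour at all in many of the parts, so the entry points $u_{ij}\in N(v_i)\cap V_{ij}$ do not exist. (Incidentally, Hall's theorem is unnecessary here -- the sets $V_{ij}$, $j\ne i$, are pairwise disjoint, so conflicts among representatives cannot occur; the real issue is the non-existence of any candidate at all.) Since $\lambda\ge\sqrt{d(n-d)/(n-1)}$ by \Cref{lemma:pseudorandom}, the hypothesis $d/\lambda\ge C\log^3 n$ permits $d$ to be as small as $\Theta(\log^6 n)$, and then the constraint $t\le cd$ allows $t=\Theta(\log^6 n)\gg\sqrt{d}$, squarely in the broken regime.

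Even in the dense regime where $d'=dm/n$ is large, claim (i) that each $G[V_{ij}]$ is an $(|V_{ij}|,d',\lambda')$-graph with a \emph{better} pseudorandomness ratio than $G$ is not a corollary of the Expander Mixing Lemma plus Chernoff: the mixing lemma controls edge counts, not the spectrum of induced subgraphs, and eigenvalue interlacing only gives $\lambda'\le\lambda$, so $d'/\lambda'\ge(d/\lambda)\cdot m/n$, which is far smaller than $d/\lambda$ when $m\ll n$. Establishing that the second eigenvalue of a random induced subgraph actually \emph{shrinks} to $O(\sqrt{d'})$ is a substantial and, for general $(n,d,\lambda)$-graphs, unproven estimate that your outline treats as routine. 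This is why the paper's proof avoids disjoint per-edge chunks altogether: it embeds a single shared sorting-network gadget (\Cref{lemma:find_sorting_network_in_expander}), builds the initial path segments via the extendability method through a common reservoir, and then uses a permutation on the endpoints of the sorting network to stitch up the correct branching-path incidences. All $\binom{t}{2}$ paths thread through the \emph{same} small expanding sets, so every connection step sees the full spectral gap $d/\lambda$ of $G$ rather than the degraded parameters of a tiny induced subgraph, and the argument remains valid all the way to $t\le cd$.
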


The extra polylog-factor comes from the fact that the depth of the sorting network we are using is $\Theta(\log^3n)$, which forces us to use connecting paths of length $\Omega(\log^3 n)$.
We believe the right bound on $t$ should be of the form $t\ll \sqrt{\frac{n}{\log n}}$ as in \Cref{thm:pseudorandom:2}.

\begin{conjecture}
    There exist positive constants $C > 0$ and $0 < c < 1$ such that the following holds. If $G$ is an $(n,d,\lambda)$-graph with $d/\lambda\ge C$, then $G$ contains a spanning nearly-balanced $K_t$-subdivision for all $2\le t\le \min\{cd,c\sqrt{\frac{n}{\log n}}\}$.
\end{conjecture}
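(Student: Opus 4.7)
The plan is to adapt the proof of \Cref{thm:pseudorandom:2}, replacing the sorting-network argument used to redistribute vertices among branch paths with a more direct distributed-absorption scheme. Broadly, the proof of \Cref{thm:pseudorandom:2} constructs a short $K_t$-subdivision using expansion and then absorbs the remaining vertices into a single long branch path; to make the result nearly-balanced while keeping $d/\lambda\ge C$, one needs to spread the absorption evenly across all $\binom{t}{2}$ branch paths.

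Concretely, I would proceed in four stages. First, select branch vertices $v_1,\dots,v_t$ whose neighbourhoods are typical, which can be done greedily via the Expander Mixing Lemma. Second, reserve a small random flex set $R\subseteq V(G)$, and randomly partition the remaining vertices into $\binom{t}{2}$ parts $V_{ij}$ of equal size so that, with high probability, each induced subgraph $G[V_{ij}]$ inherits enough pseudorandom structure from $G$. Third, for each pair $(i,j)$, find a Hamilton path in $G[V_{ij}\cup\{v_i,v_j\}]$ from $v_i$ to $v_j$ using a Hamilton-connectivity lemma for pseudorandom graphs. Finally, use $R$ as a distributed absorber to correct the residual length discrepancies, inserting short detours through $R$ into branch paths that are too short.

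The main obstacle lies in the third stage, specifically in the regime $t=\Theta(\sqrt{n/\log n})$, where each part $V_{ij}$ has only $\Theta(\log n)$ vertices. At such a small scale, a random partition of an $(n,d,\lambda)$-graph produces parts with rather weak expansion, far below the polylogarithmic-expansion thresholds currently required by known Hamilton-path lemmas in pseudorandom graphs. This is, informally, the reason why \Cref{thm:pseudorandom:1} needs $d/\lambda\ge C\log^3 n$: the polylog factor ensures enough expansion survives into the induced subgraphs to build balanced paths. Circumventing this seems to require either a new Hamilton-path lemma working in expander fragments of essentially optimal sparsity, or a global scheme that lets branch paths cross partition boundaries while maintaining near-balance via a subtle length-exchange argument; I expect one of these to be the main technical novelty required.
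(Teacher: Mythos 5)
This statement is a \emph{conjecture} in the paper, not a theorem, and the paper itself provides no proof of it. Your proposal does not resolve the conjecture either, and to your credit you explicitly say so: you correctly pinpoint that the difficulty is finding Hamilton paths (or, in the paper's formulation, nearly-balanced connecting paths) inside expander fragments of size roughly $\Theta(\log n)$ when $d/\lambda$ is only a constant, and that every known balanced-connection tool (sorting networks, polylog-expansion Hamilton-path lemmas) degrades precisely in this regime. That is the same obstruction the paper is implicitly acknowledging by proving \Cref{thm:pseudorandom:1} only under $d/\lambda \ge C\log^3 n$ and leaving the constant-$d/\lambda$ case open.

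Two more specific remarks on your sketch. First, the plan to partition $V(G)$ into $\binom{t}{2}$ parts and route each branching path through its own part $V_{ij}$ is overly rigid: with $t = \Theta(\sqrt{n/\log n})$ each part has $\Theta(\log n)$ vertices, and a random partition of an $(n,d,\lambda)$-graph into parts of that size does not in general yield parts in which $G[V_{ij}]$ is even connected, let alone Hamilton-connected, so stage three as stated would fail outright rather than merely being hard. Second, your ``distributed absorber through $R$ with short detours'' step is exactly the kind of length-exchange scheme that, if it worked at constant $d/\lambda$, would already constitute the needed breakthrough; the paper's Theorem~\ref{thm:pseudorandom:1} uses sorting networks of depth $\Theta(\log^3 n)$ to perform this redistribution, which is why it needs $d/\lambda$ at least polylogarithmic. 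So the gap you name in your last paragraph is genuine, and it is the content of the conjecture rather than a fixable defect in your write-up.
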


\subsection{Graphs without large bipartite holes}

Arguably, the weakest notion of pseudorandomness that has been considered in the literature is that of having no large bipartite holes, as in the following definition. 

\begin{definition}\label{def:joined}
    For $m\in\mathbb N$, say a graph $G$ is \emph{$m$-joined} if for every pair of disjoint subsets $A, B \subset V(G)$ with $|A|=|B|=m$, there is an edge between them.
\end{definition}
    
Observe that \Cref{def:joined} is a fairly weak pseudorandom property as, for instance the binomial random graph $G(n,p)$ is w.h.p. $m$-joined for $m=\lceil{5\log(np)}/{p}\rceil$ and $pn>20$ (see Proposition~3.26 in~\cite{montgomery2019spanning}). On the other hand,  if $G$ is an $(n,d,\lambda)$-graph, the expander mixing lemma implies that $G$ is $\frac{\lambda n}{d}$-joined for every choice of the parameters $n,d$ and $\lambda$. Moreover, if $G$ is an $m$-joined graph, then there is no guarantee of $G$ containing any particular spanning connected structure, as $G$ might contain isolated vertices. For example, the graph consisting of a copy of $K_{n-m+1}$ together with $m-1$ isolated vertices is $m$-joined but not connected. One can even add a perfect matching from the isolated vertices to the complete graph $K_{n-m+1}$ and still fail to find a Hamilton path. This suggests one needs some sort of extra condition, and a usual choice is to add a minimum degree condition on top of the $m$-joined property. The first result in this direction is due to McDiarmid and Yolov~\cite{mcdiarmid2017hamilton}, who showed that every $n$-vertex $o(n)$-joined graph with a linear minimum degree is Hamiltonian (they even provide the optimal dependency between the minimum degree and the size of the bipartite holes). Similar results have been proved for the containment of spanning trees with bounded degree~\cite{han2023spanning}, for an almost decomposition into almost spanning trees of bounded degree~\cite{kim2020tree}, and for loose spanning trees in hypergraphs~\cite{han2024ramsey}. 
 
	Our last result says that if $G$ has a linear minimum degree and no large bipartite holes, then we can find a spanning $K_t$-subdivision even for $t=O(\sqrt{n})$. 
	
	\begin{theorem}\label{thm:main}
		For every $\mu>0$, there exist constants $0 < c,\varepsilon \leq 1$ and $n_0\in\mathbb N$ such that the following holds for all $n\ge n_0$. If $G$ is an $n$-vertex $\varepsilon n$-joined graph with $\delta(G)\ge \mu n$, then $G$ contains a spanning nearly-balanced $K_t$-subdivision for all $2 \le t \le c\sqrt{n}$. 
	\end{theorem}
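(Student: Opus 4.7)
The plan is to apply the absorption method. Set $p=\binom{t}{2}$ and fix a target branch-path length $\ell \approx (n-t)/p$, so that spanning is compatible with having all $p$ paths of length in $\{\ell-1,\ell,\ell+1\}$. The three-step scheme is: (1) reserve a small absorbing set $R\subseteq V(G)$ of size $\alpha n$, for $\alpha=\alpha(\mu,c)$ chosen small enough that $\alpha n\leq p$, and pick the branch vertices $v_1,\dots,v_t$ outside $R$, ensuring via a random partition and Chernoff-type bounds that $G[V(G)\setminus R]$ still has minimum degree at least $(\mu-o(1))n$, remains $2\varepsilon n$-joined, and every vertex has $\Theta(\alpha n)$ neighbors in $R$; (2) build a nearly-balanced $K_t$-subdivision on $V(G)\setminus R$ that uses all but at most $\alpha n$ of its vertices; (3) absorb the remaining vertices (those of $R$ together with the leftover $L$ from step (2)) into this subdivision while keeping each branch-path length within $\pm 1$ of $\ell$.

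The technical heart of step (2) is a \textbf{connecting lemma} stating that in any $m$-joined graph $H$ with $\delta(H)\geq \mu'|V(H)|$, any two vertices can be joined by a path of any prescribed length $k$ in a wide linear range, and moreover the path may be required to avoid a prescribed $o(n)$-sized set. This can be established from the fact that $m$-joinedness with linear minimum degree forces diameter at most $3$, combined with P\'osa-style rotation/extension arguments inside the large expanding neighborhoods. Given such a lemma, one would process the $p$ pairs $(v_i,v_j)$ in a fixed order, extracting for each a $v_i$--$v_j$ path of an appropriate pre-assigned length (one of $\ell$ or $\ell\pm 1$, chosen so that the totals work out) through still-unused vertices of $V(G)\setminus R$. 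A direct counting argument then shows that the unused vertices retain the required min-degree and joinedness hypotheses throughout the iterative process, provided at least $\alpha n$ vertices remain unused at each step.

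For step (3) I would use \textbf{distributed vertex absorbers}: for each $w\in R\cup L$, find a distinct pair $(a,c)$ of consecutive vertices on some already-built branch path with $w\in N(a)\cap N(c)$, and replace the edge $ac$ by the path $a$--$w$--$c$, thereby extending that path by exactly one vertex. Since $|R\cup L|\leq 2\alpha n\leq p$ once $\alpha$ is chosen small enough relative to $c$, the $|R\cup L|$ absorptions can be distributed over the $p$ branch paths so that no single path absorbs more than one extra vertex, preserving near-balance.

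The main obstacle is step (3): one must guarantee that the $|R\cup L|$ required absorber pairs can be chosen simultaneously and pairwise disjointly on the already-built branch paths. The natural way to handle this is to engineer step (2) with absorption in mind --- planting along each branch path a collection of candidate edges $ac$ whose endpoints share many common neighbors in $R$ --- and to close out with a Hall-type matching argument assigning each leftover vertex to an available absorption slot. Coordinating all the parameters so that the joinedness, min-degree, and absorption flexibility are preserved throughout the iterative path construction, while simultaneously planting enough absorption slots, is the main technical challenge.
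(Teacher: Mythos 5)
Your overall three-step scheme (reserve, build an almost-spanning nearly-balanced subdivision, absorb) is the right skeleton, and your step (2) --- a connecting lemma plus greedy path-building --- is essentially a more laborious version of the paper's approach, which instead pulls a single almost-spanning path out of $G$ via a depth-first-search argument (Lemma~\ref{lemma:long-path}) and then chops it into $\binom{t}{2}$ nearly-equal pieces. Both routes work for step~(2); the paper's is shorter because splitting one long path automatically gives near-balance, so no ``connecting lemma with prescribed length'' is ever needed.

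The genuine gap is step~(3). You propose distributed vertex absorbers --- one edge $ac$ per leftover vertex $w$ with $w\in N(a)\cap N(c)$ --- closed out by ``a Hall-type matching argument assigning each leftover vertex to an available absorption slot''. This does not work as stated, for two reasons. First, the leftover set $L$ from step (2) is of unpredictable composition, so you cannot plant absorbers \emph{for $L$ specifically}; you would need to plant absorbers for every vertex of $V(G)\setminus R$ that could conceivably be leftover, and then coordinate them so that for whichever set is actually left, a system of distinct representatives exists. There is nothing in your proposal that establishes Hall's condition here, and for an arbitrary collection of planted triangles it will simply fail: a vertex $w$ might see none of the edges you happened to reserve. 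Second, even for the controlled set $R$, one absorber per vertex gives no slack: if any assignment conflicts arise you have no fallback. This is exactly the problem Montgomery's template construction solves. The paper sets aside a ``robust'' reservoir $R_1\cup R_2$ with $|R_2|=r+k$, builds for each $v\in R_1\cup R_2$ a bundle of $100$ disjoint absorbers, strings them together along the template graph $H$ (Lemma~\ref{lemma:template}) into $3r$ absorbing paths $Q_i$ each of which can absorb any of its $\le 100$ designated targets, and the template's defining property guarantees a perfect matching between $[3r]$ and $R_1\cup R_2'$ \emph{for every} size-$r$ subset $R_2'\subseteq R_2$. It then steers the construction so that what is actually left unabsorbed is precisely $R_1$ together with some $r$-subset of $R_2$. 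Without this (or an equivalent) robustness mechanism, your Hall-type argument is an assertion, not a proof. As a smaller point, your cardinality constraint $2\alpha n\le \binom{t}{2}$ breaks down entirely for bounded $t$ (take $t=2$); near-balance does not require that each branch path absorb at most one vertex, only that absorptions be distributed evenly, so this constraint should be dropped and replaced by an even distribution of roughly $|R\cup L|/\binom{t}{2}$ absorptions per branch path.
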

A nice implication of Theorem~\ref{thm:main} is a result regarding the \textit{randomly perturbed graph model}. In this model, one starts with an $n$-vertex  (deterministic) graph $G_\mu$ satisfying $\delta(G_\mu)\geq \mu n$, and then sprinkles a few random edges on top of $G_\mu$, say by using an instance of the binomial random graph $G(n,p)$. This model is interesting as it attempts to describe graphs which are essentially deterministic but for whose overall structure there may be some slight uncertainty. A desired result in this setting would be to sprinkle as few edges as possible whilst guaranteeing that $G_\mu \cup G(n,p)$ satisfies certain properties with high probability. For example, the above-mentioned result of McDiarmid and Yolov~\cite{mcdiarmid2017hamilton} implies a result of Bohman, Frieze and Martin~\cite{bohman2003many}, which says that $G_\mu\cup G(n,C/n)$ is w.h.p. Hamiltonian as long as $C=C(\mu)$ is large enough. As a corollary of Theorem~\ref{thm:main}, we obtain the following result for spanning clique subdivisions in the randomly perturbed setting. 

\begin{corollary}\label{cor:randomly-perturbed}
    For every $\mu>0$, there exist constants $C > 0$ and $0 < c \leq 1$ such that the following holds. Let $G$ be an $n$-vertex graph with $\delta(G)\ge \mu n$ and let $p\ge C/n$. Then w.h.p. $G\cup G(n,p)$ contains a spanning nearly-balanced $K_t$-subdivision for all $2\le t \le c\sqrt{n}$. 
\end{corollary}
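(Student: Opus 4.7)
The plan is to derive \Cref{cor:randomly-perturbed} as a short reduction to \Cref{thm:main}. The two crucial monotonicity observations are: (i) if $H$ is $m$-joined, then any supergraph of $H$ on the same vertex set is also $m$-joined, and (ii) the minimum-degree condition $\delta(G)\ge \mu n$ is preserved by adding edges. Hence if I can show that $G(n,p)$ alone is w.h.p.\ $\varepsilon n$-joined (for the value of $\varepsilon=\varepsilon(\mu)$ produced by \Cref{thm:main}), then $G\cup G(n,p)$ will satisfy both hypotheses of \Cref{thm:main} and the conclusion follows.

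Concretely, I first invoke \Cref{thm:main} with parameter $\mu$ to obtain constants $c,\varepsilon\in(0,1]$ and a threshold $n_0$. Treating $\varepsilon$ as fixed, I then pick $C=C(\mu)$ large enough that $G(n,C/n)$ is w.h.p.\ $\varepsilon n$-joined. This is a routine union-bound computation: for each ordered pair of disjoint sets $A,B\subseteq[n]$ of size $m=\varepsilon n$, the probability that $G(n,p)$ has no $A$--$B$ edge is at most $(1-p)^{m^2}\le \exp(-C\varepsilon^2 n)$, and there are at most $\binom{n}{m}^2\le 4^n$ such pairs, so choosing $C$ sufficiently large in terms of $\varepsilon$ drives the failure probability to $o(1)$.

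Putting the two ingredients together, with probability $1-o(1)$ the graph $G\cup G(n,p)$ is $\varepsilon n$-joined and has minimum degree at least $\mu n$. Applying \Cref{thm:main} to $G\cup G(n,p)$ then yields the desired spanning nearly-balanced $K_t$-subdivision for every $2\le t\le c\sqrt n$. There is essentially no substantial obstacle here; the only point requiring care is the order in which the constants are chosen ($\varepsilon$ first, coming from \Cref{thm:main}, and then $C$ large in terms of $\varepsilon$), so that the randomly sprinkled edges genuinely supply the weak pseudorandomness needed to invoke \Cref{thm:main} on top of the deterministic minimum-degree hypothesis.
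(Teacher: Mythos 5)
Your proof is correct and is exactly the intended derivation; the paper states Corollary~\ref{cor:randomly-perturbed} without writing out a proof, and your argument supplies the standard one. You correctly identify the two monotonicity facts (both $m$-joinedness and the minimum-degree bound are preserved under adding edges), you choose the constants in the right order ($\varepsilon$ from Theorem~\ref{thm:main} first, then $C$ large in terms of $\varepsilon$), and the union-bound calculation showing $G(n,C/n)$ is w.h.p.\ $\varepsilon n$-joined is routine and correct (one could alternatively cite the fact mentioned in the paper that $G(n,p)$ is w.h.p.\ $\lceil 5\log(np)/p\rceil$-joined, which gives the same conclusion once $C$ is large). No gaps.
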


\section{Preliminaries}\label{sec:prelim}
We use standard graph theory notation. For a graph $G$, we let $V(G)$ and $E(G)$ denote the set of vertices and edges of $G$, respectively, and write $|G|=|V(G)|$ for the size of $G$. Given a vertex $x\in V(G)$, we denote by $N(x)$ the set of neighbours of $x$ and write $d(x):=|N(x)|$ for the degree of $x$. We let $\delta(G)$ denote the minimum of $d(x)$ among all vertices $x\in V(G)$. Given a subset $U\subset V(G)$, we write $N(x,U)$ for the set of neighbours of $x$ in $U$ and write $d(x,U):=|N(x,U)|$ for the degree of $x$ in $U$. We write $\Gamma(U)=\bigcup_{u\in U}N(u)$ for the neighbourhood of $U$ and $N(U)=\Gamma(U)\setminus U$ for the external neighbourhood of $U$. Given a subset $A\subset V(G)$, we write $G[A]$ for the graph induced by $A$ and for $B\subset V(G)\setminus A$, we denote by $G[A,B]$ the bipartite graph induced by $A$ and $B$, which has bipartition classes $A$ and $B$ and edges $ab\in E(G)$ with $a\in A$, $b\in B$. We will use subscripts to indicate which graph we are working with if we are dealing with more than one graph at the same time. For a graph $G$ and a subset of vertices $U \subset V(G)$, we let $I(U)$ denote the empty graph on the vertex set $U$.
    
A path in $G$ is a sequence of distinct vertices $P=v_1\ldots v_{t+1}$ such that $v_iv_{i+1}\in E(G)$ for all $i\in [t]$, in which case we say $v_1$ and $v_{t+1}$ are the endpoints of $P$ and the interior of $P$ is the set $\text{int}(P)=\{v_2,\ldots, v_t\}$.  The size of a path $P$ is the number of vertices of $P$ and the length of $P$ is its number of edges. For vertices $a,b\in V(G)$, an $(a,b)$-path is a path with $a$ and $b$ as endpoints and for sets $A,B\subset V(G)$, an $(A,B)$-path is a path with one endpoint in $A$ and another in $B$. Given a subgraph $H\subset G$ and a path $P\subset G$, we let $H+P$ denote the graph with vertex set $V(H+P)=V(H)\cup V(P)$ and edge set $E(G+P)=E(G)\cup E(H)$.  If $F$ is a fixed graph and $F'$ is an $F$-subdivision, the set of branch vertices of $F'$ is the image of the vertices in $V(F)$ and the set of branching paths of $F'$ corresponds to the set of all paths connecting distinct branch vertices.  
 	
For positive integers $k\leq n \in \mathbb N$, we let $[n]=\{1,\ldots, n\}$, $[n]_0=[n]\cup \{0\}$ and we also let ${[n]}\choose{k}$ denote the set of $k$-element subsets of $[n]$. Given numbers $a,b\in\mathbb R$ and $c>0$, we write $a=b\pm c$ to denote that $b-c\le a\le b+c$. Also, we use standard hierarchy notation; that is, we write $a\ll b$ if given $b$ one can choose $a$ sufficiently small so that all the subsequent statements hold. We will omit floor and ceiling symbols in order to avoid cumbersome notation, as long as it does not affect the arguments. 
	
	
\subsection{Probabilistic tools}\label{subsec:probabilistic-tools}
We will need two probabilistic results stating that degrees of vertices are inherited by taking random samples in graphs. The first result can be proved using the local lemma (see Section~2.4.2 in~\cite{montgomery2024approximate} for a proof of a stronger result).
    
\begin{lemma}\label{lemma:randomsets2}
    Let $s\ge 1$ be fixed and let $1/d\ll \gamma\ll p\ll 1/s$ and let $p_1,\ldots, p_s\in (0,1)$ with $p_i\ge p$, for $i\in [s]$, and $p_1+\ldots+p_s=1$. Let $G$ be a $d$-regular graph and let $V(G)=X_1\cup \ldots \cup X_s$ be a random partition where each vertex $v\in V(G)$ belongs to $X_i$ with probability $p_i$, making all choices independently. Then, with positive probability, for each $i\in [s]$, $|X_i|=(1\pm\gamma)p_in$ and, for each $v\in V(G)$, $d(v,X_i)=(p_i\pm \gamma)d$.
\end{lemma}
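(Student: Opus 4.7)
The plan is to apply the Lov\'asz Local Lemma (LLL) to the local degree-inheritance events, handle the $s$ global set-size events separately by Chernoff plus a union bound, and then combine the two with one inclusion-exclusion step. Concretely, for each $v \in V(G)$ and $i \in [s]$ define the bad event $A_{v,i} := \{ d(v, X_i) \ne (p_i \pm \gamma)d\}$, and for each $i \in [s]$ the bad event $B_i := \{ |X_i| \ne (1 \pm \gamma)p_i n\}$. The goal is to show that with positive probability none of these events occur.

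Since $d(v, X_i) = \sum_{u \in N(v)} \mathbf{1}[u \in X_i]$ is a sum of $d$ independent indicators with mean $p_i d \ge pd$, a standard Chernoff bound yields
\[
\Pr[A_{v,i}] \le 2\exp\!\left(-\tfrac{\gamma^2 d}{3 p_i}\right) \le 2\exp\!\left(-\tfrac{\gamma^2 d}{3}\right).
\]
Crucially, $A_{v,i}$ is determined by the class assignments of vertices in $N(v)$ alone, so $A_{v,i}$ is mutually independent of the family of all $A_{u,j}$ for which $N(u) \cap N(v) = \emptyset$. The vertices $u$ failing this disjointness condition lie within distance $2$ of $v$, and there are at most $d^2 + 1$ of them in a $d$-regular graph, so the dependency graph has maximum degree at most $(d^2+1)s$. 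The symmetric LLL condition
\[
e \cdot 2\exp(-\gamma^2 d / 3) \cdot \bigl((d^2+1)s + 1\bigr) \le 1
\]
holds comfortably under $1/d \ll \gamma \ll 1/s$, since $\exp(-\gamma^2 d/3)$ dominates any polynomial in $d$. The LLL then guarantees that, with probability at least some positive constant $\alpha = \alpha(s, \gamma) > 0$, none of the events $A_{v,i}$ occur.

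For the size events, $|X_i|$ is binomial with mean $p_i n$, so Chernoff gives $\Pr[B_i] \le 2\exp(-\gamma^2 p n / 3) = o(1)$, and a union bound over $i \in [s]$ shows $\Pr[\bigcup_i B_i] = o(1)$. Hence the probability that simultaneously no $A_{v,i}$ occurs and no $B_i$ occurs is at least $\alpha - o(1) > 0$ for $n$ large, which produces the desired partition. The only subtlety — really the only non-routine step — is the dependency bookkeeping for the LLL: one has to observe that $A_{v,i}$ does not depend on the class of $v$ itself (only on $N(v)$) and then correctly count the vertices within distance $2$ of $v$; the assumed hierarchy is so generous that any polynomial-in-$d$ estimate is enough.
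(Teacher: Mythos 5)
The high-level approach — apply the Lov\'asz Local Lemma to the local degree events $A_{v,i}$, noting that $A_{v,i}$ depends only on the assignments of $N(v)$ so events at ``ball distance'' more than two are independent — is exactly the approach the paper alludes to (the paper cites Section~2.4.2 of Montgomery's survey rather than giving its own proof). The Chernoff estimate, the bound on the dependency degree by roughly $(d^2+1)s$, and the verification of the symmetric LLL condition under $1/d\ll\gamma\ll p\ll 1/s$ are all fine.

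However, there is a genuine gap in the way you combine the LLL with the global size events $B_i$. You assert that the LLL guarantees $\Pr\bigl[\bigcap_{v,i}\bar A_{v,i}\bigr]\ge\alpha(s,\gamma)$ for a \emph{constant} $\alpha$ depending only on $s$ and $\gamma$, and then conclude with ``$\alpha-o(1)>0$''. But the LLL does not give a lower bound that is independent of $n$: the symmetric version yields $\Pr\bigl[\bigcap\bar A_{v,i}\bigr]\ge(1-\tfrac{1}{D+1})^{N}$, and here the number of events is $N=ns$ while $D+1\approx d^2s$, so the guaranteed probability is roughly $\exp(-n/d^2)$, which tends to $0$ as $n\to\infty$ (and $n$ can be arbitrarily large compared with $d$ in a $d$-regular graph). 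Thus ``$\alpha-o(1)$'' is not meaningful as written. One can rescue the union-bound combination by explicitly comparing decay rates, since $\Pr[\bigcup_i B_i]\le 2s\exp(-\gamma^2 pn/3)$ decays faster than $\exp(-O(n/d^2))$ precisely because $\gamma d\to\infty$; but this comparison is the crux and needs to be made. A cleaner fix that avoids the $B_i$ events altogether: run the LLL on the slightly tighter events $A'_{v,i}:=\{d(v,X_i)\ne p_id\pm\gamma p\,d\}$ (Chernoff still gives $\Pr[A'_{v,i}]\le 2\exp(-\gamma^2p^2d/3)$, ample under the hierarchy), and then use that $G$ is $d$-regular to double-count: $d|X_i|=\sum_{v\in V(G)}d(v,X_i)=n(p_id\pm\gamma p\,d)$, so $|X_i|=p_in\pm\gamma pn\subseteq(1\pm\gamma)p_in$ since $p\le p_i$. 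This derives the size condition for free from the degree condition, with no separate Chernoff/union-bound step and no probability-combination issue.
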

In the second result, we partition the vertex set into $O(\log^4n)$ sets at the cost of needing a stronger bound on the minimum degree. To do so, we use the following result (see Lemma 3.1 in~\cite{hyde2023spanning}).

\begin{lemma}\label{lemma:randomsets} 
    Let $1/n\ll \gamma\ll 1$ and let $G$ be an $n$-vertex graph with $\delta(G)\geq \log^6 n$.
    Let $R$ be a uniformly random subset of $V(G)$ of size $k\geq n/\log^4 n$, and let $v\in V(G)$. Then, \[\mathbb{P}\left(d(v, R)=(1\pm \gamma)d(v)\cdot\tfrac{k}{n}\right)\geq 1-n^{-2}.\]
\end{lemma}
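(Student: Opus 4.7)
The plan is to recognize $X := d(v,R) = |N(v) \cap R|$ as a hypergeometric random variable and apply a standard concentration inequality. Since $R$ is a uniformly random $k$-element subset of $V(G)$ and $N(v)$ is a fixed subset of size $d(v)$, the variable $X$ counts how many of the $d(v)$ neighbours of $v$ fall into $R$. Its mean is therefore $\mu := d(v) \cdot k/n$, and the hypotheses $d(v) \geq \delta(G) \geq \log^6 n$ together with $k \geq n/\log^4 n$ give the lower bound $\mu \geq \log^2 n$.

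With this setup, I would invoke the Chernoff-type bound for hypergeometric random variables: for any $0 < \gamma \leq 1$,
\[
\mathbb{P}\bigl(|X - \mu| \geq \gamma \mu\bigr) \leq 2\exp(-\gamma^2 \mu / 3).
\]
This follows, for example, from Hoeffding's classical observation that the moment generating function of a hypergeometric random variable is dominated by that of the corresponding binomial, after which the usual multiplicative Chernoff inequality applies. Substituting the lower bound on $\mu$, I obtain
\[
\mathbb{P}\bigl(|X - \mu| \geq \gamma \mu\bigr) \leq 2\exp\bigl(-\tfrac{\gamma^2}{3} \log^2 n\bigr).
\]
Since the hierarchy $1/n \ll \gamma \ll 1$ certainly forces $\gamma^2 \log n \to \infty$, the right-hand side is at most $n^{-2}$ for $n$ sufficiently large, which is exactly the bound claimed in the lemma.

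There is no real obstacle here: the argument is a one-shot application of hypergeometric Chernoff, and the only thing to verify is that the expectation $\mu$ is large enough — specifically, of order at least $\log^2 n$ — so that the exponential tail beats $n^{-2}$. This is precisely the reason for the seemingly strong minimum degree hypothesis $\delta(G) \geq \log^6 n$: combined with $k \geq n/\log^4 n$, it provides the slack of $\log^2 n$ needed inside the Chernoff exponent. Any weakening of either hypothesis would force one to use sharper concentration tools or a smaller deviation parameter $\gamma$ to still land below $n^{-2}$.
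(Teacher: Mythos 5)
Your proof is correct. The paper itself does not prove this lemma but cites it as Lemma 3.1 of Hyde, Morrison, M\"uyesser and Pavez-Sign\'e, and the argument there is the same standard one you give: observe that $d(v,R)$ is hypergeometric with mean $d(v)k/n \geq \log^2 n$ (which is exactly what the hypotheses $\delta(G)\geq \log^6 n$ and $k \geq n/\log^4 n$ are engineered to give), invoke the Chernoff-type tail bound for hypergeometric variables, and note that $\exp(-\gamma^2\log^2 n/3)\leq \tfrac{1}{2}n^{-2}$ once $n$ is large relative to the constant $\gamma$.
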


\begin{corollary}
    Let $1/n\ll\gamma\ll 1$ and let $G$ be an $n$-vertex graph with $\delta(G)\ge \log^6 n$. Let $s\ge 2$ and suppose $n$ decomposes as $n=n_1+\ldots +n_s$, where $n_i\ge n/\log^4 n$ for each $i\in [s]$. If $V(G)=V_1\cup\ldots\cup V_s$ is a partition chosen uniformly at random subject to $|V_i|=n_i$ for each $i\in [s]$, then, with probability $1-o(1)$, for every vertex $v\in V(G)$ and $i\in [s]$, $d(v,V_i)=(1\pm \gamma)d(v)\cdot \frac{n_i}{n}$.
\end{corollary}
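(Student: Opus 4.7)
The plan is to reduce the corollary to Lemma~\ref{lemma:randomsets} by a union bound over all vertices and all parts of the partition. The key observation is that, although the sets $V_1,\dots,V_s$ are correlated (they form a partition), each individual $V_i$ has the correct marginal distribution: since the partition is chosen uniformly at random subject to $|V_i|=n_i$, the set $V_i$ is itself a uniformly random subset of $V(G)$ of size $n_i$. This is enough to apply Lemma~\ref{lemma:randomsets} to each $V_i$ separately, without needing any joint control.

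First I would fix $i\in[s]$ and a vertex $v\in V(G)$. Since $n_i\geq n/\log^4 n$ and $\delta(G)\geq \log^6 n$, the hypotheses of Lemma~\ref{lemma:randomsets} are satisfied with $k=n_i$, so
\[
\mathbb{P}\!\left(d(v,V_i)\neq (1\pm\gamma)d(v)\cdot\tfrac{n_i}{n}\right)\leq n^{-2}.
\]
Next I would bound the number of parts: from $n_i\geq n/\log^4 n$ for every $i$ and $\sum_{i=1}^s n_i=n$, we get $s\leq \log^4 n$. Then a union bound over the at most $n\cdot s\leq n\log^4 n$ pairs $(v,i)$ yields that the probability that \emph{some} pair $(v,i)$ fails is at most $n\log^4 n\cdot n^{-2}=\log^4 n / n=o(1)$, which is exactly the conclusion.

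I do not see any real obstacle here; the entire content of the corollary is the single-sample estimate of Lemma~\ref{lemma:randomsets} combined with the observation that marginals of a uniform partition are uniform subsets of the correct size. The only thing that needs to be checked carefully is that the hypotheses $k\geq n/\log^4 n$ and $\delta(G)\geq \log^6 n$ indeed transfer to every individual part, which follows immediately from the assumptions on $n_i$ and on $G$. No more delicate concentration argument (Chernoff, Azuma, local lemma, etc.) is needed, because Lemma~\ref{lemma:randomsets} already gives a polynomial tail bound that easily absorbs the polylogarithmic number of parts.
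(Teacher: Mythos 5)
Your proposal is correct and matches the paper's proof exactly: both observe that each $V_i$ is marginally a uniform $n_i$-subset, apply Lemma~\ref{lemma:randomsets} to each pair $(v,i)$, and conclude by a union bound over the $s\cdot n\le n\log^4 n$ pairs.
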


\begin{proof} 
    Note that, for each $i\in [s]$, $V_i$ has the same distribution as a uniformly chosen $n_i$-element subset of $V(G)$. For each vertex $v\in V(G)$ and index $i\in [s]$, Lemma~\ref{lemma:randomsets} gives 
    \[\mathbb P\big(d(v,V_i)=(1\pm \gamma)d(v)\cdot \tfrac{n_i}{n}\big)\ge 1-n^{-2}.\] Therefore, by a union-bound over all vertices $v\in V(G)$ and indices $i\in [s]$, we have
    \[\mathbb P\left(\exists v\in V(G)~\text{and } i\in[s] \text{ s.t. }\big|d(v,V_i)-d(v)\cdot \tfrac{n_i}{n}\big|>\gamma d(v)\cdot \tfrac{n_i}{n}\right)\le  s\cdot n\cdot n^{-2}\le \frac{\log^4 n}{n}.\] 
\end{proof}


\subsection{Pseudorandom properties of $(n,d,\lambda)$-graphs}
The following lemma collects all the basic properties of $(n,d,\lambda)$-graphs that will be used throughout the paper (see~\cite{krivelevich2006pseudo} for the proofs). 
	
\begin{lemma}\label{lemma:pseudorandom}
    For an $(n,d,\lambda)$-graph $G$, the following properties hold.
    \begin{enumerate}
        \item \label{lemma:secondeig}$\lambda\ge \sqrt{d\cdot\frac{n-d}{n-1}}$.   
			
	\item \label{lemma:joined}$G$ is $\frac{\lambda n}{d}$-joined.
			
        \item\label{lemma:expander}For every pair of (not necessarily disjoint) sets $A,B\subset V(G)$, we have 
        \[\left|e(A,B)-\tfrac{d}{n}|A||B|\right|<\lambda\sqrt{|A||B|}.\]
    \end{enumerate}
\end{lemma}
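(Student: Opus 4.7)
The three statements are classical facts about $(n,d,\lambda)$-graphs and it is most natural to prove them in the order (iii), (ii), (i). Let $A$ denote the adjacency matrix of $G$, with eigenvalues $d=\lambda_1\ge \lambda_2\ge \ldots\ge \lambda_n$ satisfying $|\lambda_i|\le \lambda$ for $i\ge 2$, and let $v_1=\mathbf{1}/\sqrt{n}, v_2,\ldots, v_n$ be a corresponding orthonormal eigenbasis (such a basis exists since $G$ is $d$-regular and $A$ is symmetric).

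For part (iii), I would write $e(A,B)=\mathbf{1}_A^TA\mathbf{1}_B$, expand both indicator vectors in the eigenbasis as $\mathbf{1}_A=\sum_i \alpha_i v_i$ and $\mathbf{1}_B=\sum_i \beta_i v_i$, and observe that $\alpha_1=\langle \mathbf{1}_A,v_1\rangle=|A|/\sqrt{n}$ and similarly $\beta_1=|B|/\sqrt{n}$. Plugging in gives
\[
e(A,B)=\sum_{i=1}^n \lambda_i\alpha_i\beta_i=\frac{d|A||B|}{n}+\sum_{i=2}^n \lambda_i\alpha_i\beta_i,
\]
and the tail is bounded in absolute value, via Cauchy--Schwarz and Parseval, by $\lambda\sqrt{\sum_{i\ge 2}\alpha_i^2}\sqrt{\sum_{i\ge 2}\beta_i^2}\le \lambda\|\mathbf{1}_A\|\|\mathbf{1}_B\|=\lambda\sqrt{|A||B|}$.

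For part (ii), I would simply invoke (iii) with $|A|=|B|=\frac{\lambda n}{d}$ to deduce $e(A,B)>\frac{d}{n}|A||B|-\lambda\sqrt{|A||B|}=0$, so there is at least one edge between any two disjoint sets of this size.

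For part (i), I would use the trace identity $\mathrm{tr}(A^2)=\sum_{i=1}^n \lambda_i^2=2|E(G)|=nd$, where the last equality uses $d$-regularity. Isolating the top eigenvalue gives $\sum_{i=2}^n\lambda_i^2=nd-d^2=d(n-d)$, and since each of these $n-1$ terms is at most $\lambda^2$, we obtain $(n-1)\lambda^2\ge d(n-d)$, which rearranges to the claimed bound. None of these steps should be obstacles; the only subtle point is making sure the strict inequality in (iii) (inherited from the author's statement) is preserved, which holds as long as $|A||B|>0$, and this is enough to extract an edge in (ii).
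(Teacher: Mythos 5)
The paper does not prove this lemma: it simply cites the survey of Krivelevich and Sudakov \cite{krivelevich2006pseudo} for all three parts. Your arguments are precisely the standard proofs found there — the spectral decomposition of $\mathbf{1}_A^T A\mathbf{1}_B$ for the expander mixing lemma, a direct application of it (with $|A|=|B|=\lambda n/d$) for the joinedness claim, and the trace identity $\mathrm{tr}(A^2)=nd$ for the eigenvalue lower bound — so the proposal is correct and takes the same route the paper implicitly relies on. One small point worth making explicit: the strict inequality in (iii) follows because $\alpha_1^2=|A|^2/n>0$ and $\beta_1^2=|B|^2/n>0$ (for nonempty $A,B$), so $\sum_{i\ge2}\alpha_i^2<|A|$ and $\sum_{i\ge2}\beta_i^2<|B|$ strictly, which is what turns your final $\le$ into the required $<$.
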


The next lemma says that minimum degree conditions in $(n,d,\lambda)$-graphs are enough to force the expansion property of small sets.  

\begin{lemma}\label{lemma:expansion:1}
    Let $D, k\ge 1$ and let $d,\lambda>0$ satisfy $d/\lambda > k D$. Let $G$ be an $(n,d,\lambda)$-graph which contains a subset $X\subset V(G)$ such that every vertex $v\in V(G)$ has at least $k D\lambda$ neighbours in $X$. Then, every subset $S\subset V(G)$ of size $|S|\le (k-1)\lambda n/d$ satisfies $|N(S)\cap X|\ge (D - 1)|S|$.
\end{lemma}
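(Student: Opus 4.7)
The plan is to argue by contradiction using the Expander Mixing Lemma (part (iii) of \Cref{lemma:pseudorandom}). Suppose some $S\subseteq V(G)$ with $|S|\le (k-1)\lambda n/d$ satisfies $|N(S)\cap X|<(D-1)|S|$; we may assume $D>1$, since for $D=1$ the conclusion $|N(S)\cap X|\ge 0$ is automatic.

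My key observation is that every edge from $S$ to $X$ has its $X$-endpoint in the set $B:=\Gamma(S)\cap X$, and since $\Gamma(S)\subseteq S\cup N(S)$, this gives
\[
|B| \;\le\; |S\cap X| + |N(S)\cap X| \;<\; |S| + (D-1)|S| \;=\; D|S|.
\]
Thus, although $X$ itself may be very large, the neighbors of $S$ inside $X$ are confined to a small set. I would then count ordered adjacent pairs $(u,v)\in S\times X$ in two ways. The degree hypothesis gives $\sum_{u\in S} d(u,X)\ge kD\lambda|S|$; conversely, for each $u\in S$ every neighbor of $u$ in $X$ automatically lies in $\Gamma(S)\cap X = B$, so $d(u,X)=d(u,B)$ and the sum equals $e(S,B)$ in the ordered-pair convention. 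The Expander Mixing Lemma then bounds $e(S,B)\le \tfrac{d}{n}|S||B|+\lambda\sqrt{|S||B|}$.

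Substituting $|B|<D|S|$ and $|S|\le (k-1)\lambda n/d$, and dividing through by $\lambda|S|$, the inequality collapses to $kD < (k-1)D + \sqrt{D}$, forcing $\sqrt{D}<1$ and contradicting $D>1$. I do not foresee any real obstacle; the only mild subtlety is recognizing the identity $\sum_{u\in S} d(u,X)=e(S,B)$, which aligns exactly with the ordered-pair convention of the Expander Mixing Lemma and so avoids any factor-of-two nuisance arising from a possible overlap between $S$ and $B$.
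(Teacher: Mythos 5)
Your proof is correct and is essentially the same as the paper's: both hinge on applying the expander mixing lemma to the pair $(S,\Gamma(S)\cap X)$, using the hypothesis to lower-bound $e(S,\Gamma(S)\cap X)$ by $kD\lambda|S|$, and deriving a contradiction from $|\Gamma(S)\cap X|<D|S|$. The only cosmetic difference is that you frame the argument as the contrapositive (starting from $|N(S)\cap X|<(D-1)|S|$), whereas the paper assumes $|\Gamma(S)\cap X|<D|S|$ directly and deduces $|N(S)\cap X|\ge(D-1)|S|$ at the end.
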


\begin{proof}
    Let $S\subseteq V(G)$ be a vertex subset of $G$ with $|S| \leq (k-1)\lambda n / d$.
    By \Cref{lemma:pseudorandom}~\ref{lemma:expander}, the following inequality holds.
    \begin{equation}\label{eq:1}
        kD\lambda |S| \leq e(S, \Gamma_G(S)\cap X) \leq \frac{d}{n}|S||\Gamma(S)\cap X| + \lambda \sqrt{|S||\Gamma(S)\cap X|}.    
    \end{equation}
    Assume $|\Gamma(S)\cap X| < D|S|$. Then \eqref{eq:1} implies that we have
    \[ 
        kD\lambda |S| < \frac{d D}{n}|S|^2 + \lambda \sqrt{D} |S|.
    \]
    Thus, we deduce that $|S| > (k-1)\lambda n/ d$, a contradiction.
    Hence, we have $|N(S) \cap X| \geq |\Gamma(S)\cap X| - |S| \geq (D-1)|S|$. This completes the proof.
\end{proof}

The last result of this section gives a sufficient condition to find perfect matchings between small subsets of $(n,d,\lambda)$-graphs. 
	
\begin{lemma}[Lemma 3.6 in~\cite{hyde2023spanning}]\label{lemma:matching}
    Let $0 < 1/C \ll\varepsilon\ll 1$ and let $n,d\in\mathbb N$ such that $n\ge 3$ and $\lambda >0$ satisfy $d/\lambda \ge C\log^3n$. Suppose $G$ is an $(n,d,\lambda)$-graph that contains disjoint subsets $A,B\subset V(G)$ with $|A|=|B|$ and $\delta (G[A,B])\ge \varepsilon d/2\log^3n$. Then, $G[A,B]$ contains a perfect matching.    
\end{lemma}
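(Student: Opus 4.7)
The natural approach is to verify Hall's condition for the bipartite graph $G[A,B]$. Writing $m=|A|=|B|$, since the graph is balanced it suffices to prove $|N(S)\cap B|\ge |S|$ for every $S\subseteq A$. I would first reduce to the case $|S|\le m/2$ by the standard complementary argument: if a larger $S$ violated Hall, then $T:=B\setminus N(S)$ would violate Hall on the $B$-side with $|T|\le m/2$, and the $B$-side argument is symmetric since the minimum degree hypothesis treats $A$ and $B$ symmetrically.

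The main step would combine the degree lower bound with the expander mixing lemma (\Cref{lemma:pseudorandom}~\ref{lemma:expander}). Setting $X=|N(S)\cap B|$, double-counting edges leaving $S$ gives
\[
\frac{|S|\varepsilon d}{2\log^3 n}\;\le\; e(S,N(S)\cap B)\;\le\; \frac{d}{n}|S|X + \lambda\sqrt{|S|X}.
\]
A dichotomy on which of the two summands on the right dominates yields either
\[
X\ge |S|\Bigl(\frac{\varepsilon d}{4\lambda\log^3 n}\Bigr)^{2}\qquad\text{or}\qquad X\ge \frac{\varepsilon n}{4\log^3 n}.
\]
Because $d/\lambda\ge C\log^3 n$ and $\varepsilon C\gg 1$ by the hierarchy $1/C\ll\varepsilon$, the first alternative already gives $X\ge |S|$, contradicting the assumed failure of Hall. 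So we may assume the second alternative, and in particular $|S|>X\ge \varepsilon n/(4\log^3 n)$.

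To finish, I would pivot to the $(\lambda n/d)$-joined property (\Cref{lemma:pseudorandom}~\ref{lemma:joined}). Let $T:=B\setminus N(S)$, so $e(S,T)=0$ and $|T|>m-|S|\ge |S|$ since $|S|\le m/2$. The hierarchy gives $\lambda n/d\le n/(C\log^3 n)\le \varepsilon n/(4\log^3 n)<|S|\le |T|$, so both $|S|$ and $|T|$ exceed $\lambda n/d$. The joined property then forces an edge between $S$ and $T$, contradicting $e(S,T)=0$. The only delicate point is calibrating the hierarchy $1/C\ll\varepsilon\ll 1$ so that the $\log^3 n$ losses inherited from the minimum degree hypothesis are absorbed both in the dichotomy step and in the final joined-set step; the rest is routine bookkeeping.
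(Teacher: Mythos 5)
Your overall strategy — verify Hall's condition via the mixing lemma dichotomy, then close with joinedness — is correct and is the natural proof here (the paper itself only cites Lemma~3.6 of~\cite{hyde2023spanning} without reproducing its proof, but that argument is of exactly this flavour). The degree lower bound gives $e(S,N_B(S))\ge |S|\varepsilon d/(2\log^3 n)$, the mixing lemma splits into the two cases you describe, the ``expansion'' branch is incompatible with $d/\lambda\ge C\log^3 n$ and $\varepsilon C\gg 1$, and the remaining branch puts a lower bound on both sides of the empty pair $(S,T)$ that contradicts $\tfrac{\lambda n}{d}$-joinedness.

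The one imprecise step is the reduction. The claim that if $|S|>m/2$ is a Hall violator then $T:=B\setminus N_B(S)$ is a violator with $|T|\le m/2$ is not true in general: $|T|=m-|N_B(S)|$, and $|N_B(S)|$ can be small, making $T$ large (e.g.\ $|S|=m$, $N_B(S)=\emptyset$ gives $T=B$). What you actually need for the joinedness step is $|T|\ge |S|$, not $|S|\le m/2$, and this is obtained cleanly by taking $S$ to be a \emph{minimum-size} violator over both sides of the bipartition: then $T$ is also a violator (since $N_A(T)\subseteq A\setminus S$ and $|T|=m-|N_B(S)|>m-|S|\ge |N_A(T)|$), so minimality forces $|T|\ge |S|$. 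With that substitution, your case (i) gives $|S|>X\ge \varepsilon n/(4\log^3 n)\ge \lambda n/d$, hence $|T|\ge|S|\ge \lambda n/d$ as well, and $e(S,T)=0$ contradicts joinedness exactly as you intended. Everything else checks out.
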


	
\subsection{The extendability method}
The main tool we will use in the proof of \Cref{thm:pseudorandom:2} and \Cref{thm:pseudorandom:1} is the \emph{extendability method}, which was first introduced by Friedman and Pippenger~\cite{FP1987} in 1987 and subsequently developed over the years (see e.g.~\cite{draganic2022rolling, H2001,montgomery2019spanning, montgomery2023ramsey}). 
	
\begin{definition}\label{def:dmextendable}
    Let $D,m\in\mathbb N$ with $D\ge 3$. Let $G$ be a graph and let $S\subset G$ be a subgraph with $\Delta(S) \leq D$.
    We say that $S$ is \emph{$(D,m)$-extendable} if for all $U\subset V(G)$ with $1\le |U|\le 2m$ we have
		
    \begin{equation}\label{def:extendability}
        |\Gamma_G(U)\setminus V(S)|\ge (D-1)|U|-\sum_{u\in U\cap V(S)}(d_S(u)-1).
    \end{equation}
\end{definition}

The following result states that it is enough to control the external neighbourhood of small sets to verify extendability.

\begin{proposition}\label{prop:weakerextendability}
    Let $D,m\in\mathbb N$ with $D\ge 3$. Let $G$ be a graph and let $S\subset G$ be a subgraph with $\Delta(S)\le D$. 
    Assume for all $U\subset V(G)$ with $1\leq|U|\leq 2m$, the following inequality holds 
	\[|N_G(U)\setminus V(S)|\geq D|U|.\]
    Then $S$ is $(D,m)$-extendable in $G$.
\end{proposition}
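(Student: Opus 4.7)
The plan is to observe that this proposition is essentially a direct unpacking of definitions, so the proof should reduce to two short observations combined by a single chain of inequalities, without ever needing to examine the structure of $S$ beyond the bound $\Delta(S)\le D$.

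First, I would clarify the relationship between $\Gamma_G(U)=\bigcup_{u\in U}N(u)$ and the external neighbourhood $N_G(U)=\Gamma_G(U)\setminus U$ used in the hypothesis. Since removing $U$ can only shrink the set, we always have $\Gamma_G(U)\setminus V(S)\supseteq N_G(U)\setminus V(S)$, so the assumption immediately yields
\[
|\Gamma_G(U)\setminus V(S)|\;\ge\;|N_G(U)\setminus V(S)|\;\ge\;D|U|
\]
for every $U\subseteq V(G)$ with $1\le |U|\le 2m$.

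Next, I would show that the right-hand side of the extendability inequality in \Cref{def:dmextendable} is at most $D|U|$. Writing $U_1=U\cap V(S)$ and using only the trivial bound $d_S(u)\ge 0$ (so that $d_S(u)-1\ge -1$), one gets $\sum_{u\in U_1}(d_S(u)-1)\ge -|U_1|\ge -|U|$, and consequently
\[
(D-1)|U|-\sum_{u\in U\cap V(S)}(d_S(u)-1)\;\le\;(D-1)|U|+|U|\;=\;D|U|.
\]
Chaining this with the previous display gives exactly the extendability inequality~\eqref{def:extendability}, and together with the assumed bound $\Delta(S)\le D$ this shows that $S$ is $(D,m)$-extendable.

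I do not anticipate any real obstacle: the only place demanding care is the (notationally easy to miss) distinction between $\Gamma_G(U)$ and $N_G(U)=\Gamma_G(U)\setminus U$, which fixes the direction of the set inclusion in the first step. All other steps are a one-line calculation, and the hypothesis on $\Delta(S)$ is only needed to make the conclusion ``$S$ is $(D,m)$-extendable'' well-defined rather than to drive the inequality.
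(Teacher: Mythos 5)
Your proof is correct. The paper states \Cref{prop:weakerextendability} without proof, evidently regarding it as a routine unpacking of \Cref{def:dmextendable}; your argument supplies exactly that unpacking. Both halves of your chain are right: $N_G(U)\setminus V(S)\subseteq \Gamma_G(U)\setminus V(S)$ gives $|\Gamma_G(U)\setminus V(S)|\ge D|U|$, and the crude bound $d_S(u)-1\ge -1$ for $u\in U\cap V(S)$ gives $(D-1)|U|-\sum_{u\in U\cap V(S)}(d_S(u)-1)\le D|U|$, so \eqref{def:extendability} holds. Your remark that the hypothesis $\Delta(S)\le D$ is needed only so that the phrase ``$S$ is $(D,m)$-extendable'' is well-formed (it is a precondition in \Cref{def:dmextendable}) rather than to drive the inequality is also accurate.
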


The main property of the extendability method is that, given~an {extendable} subgraph $S$ and a vertex $x\in V(S)$ of small degree in $S$, one can find an edge $xy$, with $x\in V(S)$ and $y\not\in V(S)$, so that $S+xy$ remains extendable. This key property can be iterated to find large structures in expander graphs, such as long cycles or large trees.
In the proof of our main theorems, we frequently use the following lemma, which says that in an extendable subgraph $S$, one can connect any pair of distinct vertices from $S$ by paths of logarithmic length, as long as some mild conditions hold.

\begin{lemma}[Corollary 3.12 in {\cite{montgomery2019spanning}}]\label{lemma:connecting}
    Let $D,m\in\mathbb N$ with $D\ge 3$, and let $k=\lceil \log (2m)/\log (D-1)\rceil$.
    Let $\ell\in\mathbb N$ satisfy $\ell\ge 2k+1$ and let $G$ be an $m$-joined graph which contains a $(D,m)$-extendable subgraph $S$ of size $|S|\le |G|-10Dm-(\ell-2k-1)$.
    Suppose that $a$ and $b$ are two distinct vertices in $S$ with $d_S(a),d_S(b)\le D/2$. 
    Then, there exists an $a,b$-path $P$ of length $\ell$ such that
    \begin{enumerate}
        \item[$(i)$] all internal vertices of $P$ lie outside $S$, and
        \item[$(ii)$] $S+P$ is $(D,m)$-extendable. 
    \end{enumerate}

\end{lemma}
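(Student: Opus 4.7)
The proof would follow the standard BFS-tree expansion argument at the heart of the extendability method: grow disjoint trees rooted at $a$ and $b$ into the complement of $S$, use the $m$-joined hypothesis to bridge them, and absorb the extra required length into a path segment, all while preserving $(D,m)$-extendability.

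First I would isolate the single-step Friedman--Pippenger move: if $T$ is $(D,m)$-extendable and $x\in V(T)$ satisfies $d_T(x)<D$, then Definition~\ref{def:dmextendable} applied with $U=\{x\}$ supplies a vertex $y\in V(G)\setminus V(T)$ with $xy\in E(G)$ such that $T+xy$ is again $(D,m)$-extendable. Iterating this move at a chosen endpoint grows paths, while iterating it in parallel over every leaf of a partial tree grows a tree whose new internal vertices each acquire $D-1$ children. Starting from any vertex $v$ with $d_T(v)\le D/2$, one builds in this way a tree $T'$ of depth $k$ rooted at $v$, disjoint from $V(T)\setminus\{v\}$, whose leaf layer $L$ satisfies $|L|\ge(D-1)^k\ge 2m$ and which uses at most $2(D-1)^k\le 4m$ new vertices.

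Concretely, I would first extend $S$ along $a$ by $\ell-2k-1$ consecutive single-step moves, producing a path $P_0$ of length $\ell-2k-1$ whose interior and far endpoint $a'$ lie outside $S$. Then from $a'$ (which has degree $1$ in the current subgraph) I grow a depth-$k$ tree $T_a$ in $V(G)\setminus V(S\cup P_0)$ with leaf set $L_a$ of size at least $2m$, and from $b$ I grow another depth-$k$ tree $T_b$, disjoint from $T_a$, with leaf set $L_b$ of size at least $2m$. The hypothesis $|S|\le|G|-10Dm-(\ell-2k-1)$ provides exactly the room needed: the $\ell-2k-1$ term accommodates $P_0$, while the $10Dm$ term is a comfortable upper bound on the total size of $T_a\cup T_b$.

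Finally, since $L_a$ and $L_b$ are disjoint sets each of size at least $m$, the $m$-joined property yields an edge $uv$ with $u\in L_a$ and $v\in L_b$. Concatenating $P_0$, the $a'$-to-$u$ path in $T_a$, the edge $uv$, and the $v$-to-$b$ path in $T_b$ gives an $(a,b)$-path $P$ of length $(\ell-2k-1)+k+1+k=\ell$ with interior disjoint from $S$; discarding the unused branches of $T_a$ and $T_b$ preserves $(D,m)$-extendability, so $S+P$ is $(D,m)$-extendable. The main delicacy is the accounting in the tree-growth step: at each layer the current leaf set must satisfy the external-neighbourhood bound in Definition~\ref{def:dmextendable} even as the two trees and $P_0$ progressively consume vertices outside $S$, and the degree conditions $d_S(a),d_S(b)\le D/2$ together with the size budget on $|S|$ are precisely what keep this inequality valid throughout the construction.
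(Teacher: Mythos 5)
The paper does not prove this lemma; it is imported verbatim from Montgomery's work (cited as Corollary~3.12 in \cite{montgomery2019spanning}), so there is no in-paper argument to compare against. That said, your reconstruction is faithful to the standard Friedman--Pippenger/Montgomery argument: absorb the surplus length into a path segment grown from $a$, grow breadth-first ``fans'' of depth $k$ from the path's new endpoint and from $b$, bridge the two leaf layers with an edge provided by $m$-joinedness, and peel off the unused branches. This is essentially the cited proof.

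Two points of bookkeeping are slightly off and worth tightening. First, $2(D-1)^k\le 4m$ is not correct as written: from $k=\lceil\log(2m)/\log(D-1)\rceil$ one only gets $(D-1)^k<2m(D-1)$, so each tree may use up to roughly $4m(D-1)$ vertices; the $10Dm$ allowance in the hypothesis still comfortably covers both trees plus $P_0$, but the stated bound is too optimistic. Second, the claim $|L|\ge(D-1)^k$ for a tree rooted at a vertex of positive degree in $S$ is also off: at $b$, which may have degree up to $D/2$ in $S$, the root can only sprout about $D/2$ children while respecting $\Delta\le D$, so the correct lower bound on the leaf layer is $\sim(D/2)(D-1)^{k-1}\ge m$ rather than $2m$ --- which is exactly what one needs anyway, since $m$-joinedness requires two disjoint sets of size $m$, not $2m$. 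Finally, ``discarding the unused branches preserves $(D,m)$-extendability'' does hold, but it deserves a word: it follows from the general fact that deleting a leaf (degree-$1$ vertex) from a $(D,m)$-extendable subgraph keeps it $(D,m)$-extendable, applied repeatedly from the leaves inward; this monotonicity is not immediate from Definition~\ref{def:dmextendable} and is a separate lemma in the extendability toolkit.
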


	
\section{Proofs}
\subsection{Proof of \Cref{thm:pseudorandom:2}}
We need a result establishing that expander graphs are \emph{Hamilton-connected}, which means that for every pair of distinct vertices, there is a Hamilton path with those vertices as endpoints. 

\begin{definition}
    We say that a graph $H$ is a $C$-\textit{expander} if $H$ is $\frac{n}{2C}$-joined and
    \stepcounter{propcounter}
    \begin{enumerate}[label =\textbf{\Alph{propcounter}}]
        \item\label{def:expander:1} for all sets $X\subset V(G)$ with $1\le |X|\le \frac{n}{2C}$, $|N(X)|\ge C|X|$.
    \end{enumerate}
\end{definition}
   
\begin{theorem}[{\cite[Theorem 7.1]{draganic2024hamiltonicity}}]\label{lemma:hamilton connected}
    There exists a universal constant $C_0 > 0$ such that the following holds for all $C \geq C_0$. If a graph $G$ is a $C$-expander, then $G$ is Hamilton-connected. 
\end{theorem}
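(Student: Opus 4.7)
The plan is to carry out a P\'osa-style rotation-extension argument, adapted to produce a Hamilton path with two prescribed endpoints rather than a Hamilton cycle. Fix distinct $a,b\in V(G)$ and let $P=a\,v_1\dots v_{k-1}\,b$ be a longest $(a,b)$-path in $G$; the goal is to show $k+1=n$. Suppose toward a contradiction that some $x\in V(G)\setminus V(P)$ exists. The key operation is P\'osa rotation at the $b$-end: whenever $bv_i\in E(G)$ with $i<k-2$, the path $a\,v_1\dots v_i\,b\,v_{k-1}\dots v_{i+1}$ is also a longest $(a,v_{i+1})$-path. Iterating such rotations (keeping $a$ anchored) and symmetrically at the $a$-end (keeping $b$ anchored) generates sets $E_b,E_a\subseteq V(P)$ of all endpoints reachable through rotation sequences.

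Next I would use the expansion property~\ref{def:expander:1} to force $|E_a|,|E_b|\ge n/(2C)$. A standard accounting shows that for each $y\in N(E_b)$, either $y\in V(P)$ and the position adjacent to $y$ on $P$ lies in $E_b$ (up to a constant-size forbidden set tied to each rotation step), or $y\notin V(P)$, in which case one could extend $P$ into a longer $(a,y)$-path through $y$, contradicting maximality. This yields $|N(E_b)|\le O(|E_b|)$, and the expansion hypothesis then forces $|E_b|\ge n/(2C)$; the same argument applies to $E_a$. Since $G$ is $n/(2C)$-joined, there exists an edge between $E_a$ and $E_b$, and by combining the rotation sequences producing the two relevant endpoints with this bridging edge I would perform a surgery on $P$ to obtain an $(a,b)$-path strictly longer than $P$ (or one of equal length that now includes~$x$), giving the desired contradiction.

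The main obstacle is the two-sided rotation bookkeeping: one must verify that rotations at the $a$-end do not destroy endpoints reachable by $b$-end rotations, so that the surgery can glue the two sides coherently. This is handled in~\cite{draganic2024hamiltonicity} through a carefully layered rotation scheme, together with an absorption step for the final sublinear leftover vertices, exploiting the strong small-set expansion provided by property~\ref{def:expander:1}. One also must treat the edge cases when forbidden indices interact with $a$ or $b$ themselves; these are resolved by making the initial choice of $P$ carefully (for instance, among all longest $(a,b)$-paths, choosing one minimising the number of rotations required to reach a given endpoint configuration).
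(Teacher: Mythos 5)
This theorem is cited, not proved, in the present paper: it is quoted verbatim from~\cite{draganic2024hamiltonicity}, so there is no in-paper proof for your proposal to be compared against. What can be assessed is whether your sketch reconstructs the cited argument, and whether it would work on its own; on both counts there are problems.

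On attribution: the P\'osa rotation--extension technique is the engine behind the \emph{earlier} results on Hamiltonicity of $(n,d,\lambda)$-graphs and of expanders (Krivelevich--Sudakov and their successors), and it is precisely the known limitations of that technique that kept the best bounds stuck at polylogarithmic values of $d/\lambda$. The contribution of Dragani\'c, Montgomery, Munh\'a Correia, Pokrovskiy and Sudakov is a fundamentally different method that avoids the P\'osa barrier, which is how they reach the optimal $d/\lambda = O(1)$ regime. Summarising their proof as a ``carefully layered rotation scheme'' together with an absorption step mischaracterises what makes that paper work.

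On the substance of your sketch, two steps do not go through as written. First, the ``contradicting maximality'' step fails in the Hamilton-connectedness setting: $P$ was chosen as a longest $(a,b)$-path. A rotation at the $b$-end produces an $(a,v_{i+1})$-path of the same length, but this is not a longest $(a,v_{i+1})$-path a priori; if $v_{i+1}$ has a neighbour $y\notin V(P)$, you obtain a longer $(a,y)$-path, which contradicts nothing, since $P$ was never claimed to be a longest path from $a$ in general. Turning this into a longer $(a,b)$-path requires a non-trivial two-sided surgery whose feasibility is exactly what needs to be shown, and the interaction between rotations at the two ends is delicate precisely because both endpoints are prescribed. Second, even setting aside Hamilton-connectedness and arguing only for Hamiltonicity, the rotation bound $|N(E_b)|\le O(|E_b|)$ combined with $n/(2C)$-joinedness only gets you a path missing up to $\Theta(n/C)$ vertices. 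Absorbing this sublinear but unbounded remainder is the whole difficulty at constant $d/\lambda$, and your proposal defers it entirely to the reference. Since the reference is exactly what you are trying to reprove, that deferral leaves the central step of the argument unaddressed.
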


\begin{proof}[Proof of \Cref{thm:pseudorandom:2}]
    We introduce auxiliary constants satisfying \[0 < 1/C\ll 1/d\ll\gamma\ll \varepsilon\ll c, p\ll 1\]
    so that $C\ge 100C_{\ref{lemma:hamilton connected}}/p$. Let $V(G)=X_1\cup X_2\cup X_3$ be a random partition so that for each $v\in V(G)$, $\mathbb P(v\in X_2)=\mathbb P(v\in X_3)=p$ and $\mathbb P(v\in X_1)=1-2p$, and all choices are made independently. Then, \Cref{lemma:randomsets2} implies that there exist pairwise disjoint sets $X_1,X_2,X_3$ such that $|X_1|=(1-2p\pm\gamma)n$, $|X_2|,|X_3|=(p\pm\gamma)n$ and for each $v\in V(G)$,
    
    \begin{enumerate}[label =({$\star$})]
        \item\label{thm:pseudorandom:2:degrees} $d(v,X_1)=(1-2p \pm \gamma)d$, $d(v,X_2)=(p\pm \gamma)d$, and $d(v,X_3)=(p\pm \gamma)d$.
    \end{enumerate}
    Our first step is to find the branch vertices of the $K_t$-subdivision.
 
    \begin{claim}\label{claim1:thm:pseudorandom:2} 
        There is a collection $\{S_1,\ldots, S_t\}$ of vertex-disjoint stars with $t-1$ leaves, where for each $i\in [t]$, the star $S_i$ is in $G[X_1]$.
    \end{claim}
    
    \begin{claimproof}
        Let $\{S_1,\ldots, S_\ell\}$ be a maximal collection of vertex-disjoint stars with $t-1$ leaves in $G[X_1]$. Suppose $\ell < t$ and let $Y = X_1\setminus \cup_{i\in [\ell]}V(S_i)$. Using that $|Y|\ge |X_1|-t^2\ge (1-4p)n$, the expander mixing lemma (Lemma~\ref{lemma:pseudorandom}~\ref{lemma:expander}) gives
        \[e(G[Y])\ge  \frac{1}{2} \left(\frac{d}{n}|Y|^2-\lambda |Y|\right)\ge \frac{|Y|}{2}\cdot \left( (1-4p)d-\lambda \right) \ge \frac{|Y|}{2}\cdot (1-5p)d,\]
        which implies that there is a vertex in $Y$ with at least $(1-5p)d\ge t$ neighbours in $Y$, contradicting the maximality of $S_1,\ldots, S_\ell$. The factor of $1/2$ in the first inequality comes from the observation that $e(Y,Y)=2 \cdot e(G[Y]).$
    \end{claimproof}

    For each $i\in [t]$, let $s_i\in V(S_i)$ be the centre of $S_i$ which we identify as the $i$-th branch vertex of the $K_t$-subdivision and let $L_i = V(S_i) \setminus \{s_i\}$ be the leaves of $S_i$. Set $Z=(X_1\cup X_2)\setminus \{s_1,\ldots, s_t\}$ and let $L: = I \left( \cup_{i\in [t]}L_i\right) \subset G[Z]$ be the empty graph. We now claim that $L$ is $(10,\frac{\lambda n}{d})$-extendable in $G[Z]$. 
    To prove this, we will show that each set $U\subset Z$ with $1\le |U|\le 2 \lambda n/d$ satisfies $|N(U)\cap (X_1\cup X_2)\setminus \cup_{i\in [t]}V(S_i)|\ge 10|U|$, from which the $(10, \lambda n/d)$-extendability is a consequence of \Cref{prop:weakerextendability}. Note that~\ref{thm:pseudorandom:2:degrees} implies that every vertex $v\in V(G)$ satisfies
    \[d_G(v,X_2)\ge (p-\gamma)d\ge \frac{pd}{2}\ge 40\lambda\]
    and thus, for any subset $U\subset Z$ of size $1\le |U|\le 2\lambda n/d$, \Cref{lemma:expansion:1} implies 
    \[|(N(U)\cap Z)\setminus V(L)|\ge |N(U)\cap X_2| \ge 10|U|.\]
    Our goal now is to find $\binom{t}{2}-1$ vertex-disjoint paths joining distinct branch vertices so that the size of the unused vertices in $X_1\cup X_2$ is at most $200\varepsilon n$. Let $\mathcal Q$ be the set of pairs $(i,j)$ with $1\le i<j\le t$ such that $(i,j)\not =(1,t)$, and find a collection of positive integers $(\ell_{e})_{e\in \mathcal Q}$  such that 

    \begin{enumerate}
        \item\label{thm2:length} $\ell_{e}\ge  100\log n$ for each $e\in \mathcal Q$, and 
        \item\label{thm2:leftover:size} $(1-p)n-200\varepsilon n \leq \sum_{e\in \mathcal Q}\ell_e\le (1-p)n-100\varepsilon n.$
    \end{enumerate} 
This is clearly possible as $t^2\le c^2n/\log n$ and $c$ is small enough.
    \begin{claim}
        There is a collection of vertex-disjoint paths $(P_e)_{e\in\mathcal Q}$ with internal vertices in $Z$ such that, for $e=(i,j)$, $P_e$ is an $(s_i,s_j)$-path of length $\ell_e$.
    \end{claim}

    \begin{claimproof}
        We will construct a collection of paths $(P'_e)_{e\in \mathcal Q}$ connecting distinct vertices in $L$ and using only vertices from $Z\setminus V(L)$ in the process. Say a vertex $v\in V(L)$ is saturated if it has already been used in one of the constructed paths. For each $e=(i,j)\in \mathcal Q$ in turn, find a path $P'_{e}$ of length $\ell_e-2$ in $G[Z]$ so that $P'_e$ is an $(L_i,L_j)$-path connecting two unsaturated vertices and such that $L$ together with all the paths we have constructed so far is a $(10,\frac{\lambda n}{d})$-extendable subgraph of $G[Z]$. If this algorithm ends with $\binom{t}{2}-1$ paths $(P'_e)_{e\in\mathcal Q}$ as described above, we can extend each of these paths to their respective branch vertex to find the desired collection of paths. So suppose the algorithm stops with a collection of paths $P'_{e_1},\ldots, P'_{e_r}$ so that $r<\binom{t}{2}-1$ and $L+P'_{e_1}+\ldots +P'_{e_r}$ is $(10,\frac{\lambda n}{d})$-extendable. Let $e=(i,j)\in\mathcal Q\setminus\{e_1,\ldots, e_r\}$. Note that both $L_i$ and $L_j$ contain at least one unsaturated vertex, say $v_i\in L_i$ and $v_j\in L_j$, as the path $P'_e$ is missing. As $\ell_e-2\ge 100\log n-2\ge 10\log_9(\frac{2\lambda n}{d})$ and 
        \[|L+P'_{e_1}+\ldots +P'_{e_r}|+(\ell_e-2)\le \sum_{e\in\mathcal Q}\ell_e\le (1-p)n-100\varepsilon n\le |Z|-100\cdot \frac{\lambda n}{d},\]
        we can use \Cref{lemma:connecting} to find a $(v_i,v_j)$-path of length $\ell_e-2$ in $G[Z]$ whose internal vertices avoid $V(L+P'_{e_1}+\ldots+P'_{e_r})$ and such that $(L+P'_{e_1}+\ldots +P'_{e_r})+P'_e$ is $(10,\frac{\lambda n}{d})$-extendable in $G[Z]$. This proves the algorithm ends with all the required paths, finishing the proof. 
    \end{claimproof}
    
    Note that up to this point, we have found all but one of the paths in the $K_t$-subdivision. Thus, it only remains to find a $(s_1,s_t)$ path using all the remaining vertices in the graph at the same time. Let $Z'=(Z\cup X_3)\setminus \cup_{e\in\mathcal Q}V(P'_e)$ and let $u_1\in L_1\cap Z'$ and $u_t\in L_t\cap Z'$ be the remaining leaves after $\cup_{e\in\mathcal Q}P_e$ is constructed. We know $d/\lambda \geq C\geq 100C_{\ref{lemma:hamilton connected}}/p$, so suppose $d/\lambda=100C'/p$ for some $C' \geq C_{\ref{lemma:hamilton connected}}$.
     \begin{claim}
        $G[Z']$ is a $C'$-expander.
    \end{claim}

    \begin{claimproof}
        Firstly, note that $|Z'|\leq (p+\gamma)n+200\varepsilon n \leq1.02pn$. Thus, we just need to check that $G[Z']$ satisfies~\ref{def:expander:1}, as $G[Z']$ is $\frac{\lambda n}{d}$-joined (Lemma~\ref{lemma:pseudorandom}~\ref{lemma:joined}) and $\frac{\lambda n}{d}=\frac{pn}{100C'}\leq \frac{|Z'|}{2C'}$.
         To check~\ref{def:expander:1}, we need to verify the expansion property of subsets $U \subset Z'$ of size at most $|Z'|/2C'\leq0.51pn/C'$. Observe that~\ref{thm:pseudorandom:2:degrees} implies each vertex $v\in Z'$ satisfies $d(v,Z')\ge d(v,X_3)\ge 0.99pd=99C'\lambda$.
         
        An application of Lemma~\ref{lemma:expansion:1} with $k_{\ref{lemma:hamilton connected}}=90$ and $D_{\ref{lemma:hamilton connected}}=1.1C'$ implies that sets $U \subset Z'$ of size $|U| \leq \frac{(k-1)\lambda n}{d}=\frac{89pn}{100C'}$ expand by a factor of at least $(1.1C'-1)\geq C'$, thus proving the claim.
    \end{claimproof}

    Finally, use Theorem~\ref{lemma:hamilton connected} to find a Hamilton path in $G[Z']$ with endpoints $v_i$ and $v_j$ to complete the spanning $K_t$-subdivision. 
\end{proof}


\subsection{Proof of Theorem~\ref{thm:pseudorandom:1}}
In this section, we find a $K_t$-subdivision where all branching paths have nearly the same length. To do so, we rely on the existence of sorting networks in sparse expanders as proved by Hyde, Morrison, M\"uyesser and Pavez-Sign\'e~\cite{hyde2023spanning}.
	
	\begin{lemma}[Lemma 2.3 in~\cite{hyde2023spanning}]\label{lemma:find_sorting_network_in_expander} There is a constant $C$ such that the following holds. Let $1/n\ll 1/K\ll 1/C$, and let $D,m\in \mathbb{N}$ satisfy $m\leq n/100D$ and $D\ge 100$. Suppose $G$ is an $n$-vertex $m$-joined graph which contains disjoint subsets $V_1, V_2\subseteq V(G)$ with $|V_1|=|V_2|\leq n/K\log^{3}n$ such that $I(V_1\cup V_2)$ is $(D,m)$-extendable in $G$.
		\par Then, for $\ell:=\lfloor C \log^3 n \rfloor$, there exists a $(D,m)$-extendable subgraph $S_{res}\subseteq G$ such that for any bijection $\sigma\colon V_1\to V_2$, there exists a $P_\ell$-factor of $S_{res}$ where each copy of $P_\ell$ has as its endpoints some $v\in V_1$ and $\sigma(v)\in V_2$.
	\end{lemma}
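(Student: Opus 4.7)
The plan is to realize a bounded-depth permutation network inside $G$ as the subgraph $S_{res}$. Let $N := |V_1| = |V_2|$. Standard rearrangeable permutation networks (e.g.\ the Bene\v{s} network, or an AKS sorting network used in permutation-routing mode) provide a network $\mathcal N$ of depth $d = O(\log N)$ consisting of $d$ layers of $N/2$ disjoint $2\times 2$ switches, such that for any bijection $\sigma \colon [N] \to [N]$ there is an assignment of swap/no-swap bits to the switches realizing $\sigma$ as the input-to-output map.

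Each switch will be realized as a $K_{2,2}$ gadget on four fresh vertices $\{x_1, x_2, y_1, y_2\}$, meaning $S_{res}$ contains all four edges $x_iy_j$ for $i,j \in \{1,2\}$. For either swap decision, the two wires through the gadget traverse the pair $\{x_1y_1, x_2y_2\}$ (identity) or $\{x_1y_2, x_2y_1\}$ (swap); in both cases all four gadget vertices are covered by the two wire paths. Between consecutive switch layers (and between the outermost layers and $V_1, V_2$), we insert \emph{wire segments}: internally vertex-disjoint paths in $G$ of length $L = \Theta(\log^2 n)$, obtained via \Cref{lemma:connecting}, respecting the inter-layer wiring dictated by $\mathcal N$. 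With $L$ chosen so that $(d+1)L + d = \ell - 1$, each wire traverses exactly $\ell$ vertices, independent of $\sigma$.

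The construction proceeds iteratively, maintaining $(D,m)$-extendability throughout, starting from $I(V_1 \cup V_2)$, which is extendable by hypothesis. At every step, $|V(S_{res})| \le N\ell = O(n/K)$ remains much smaller than $n$, so the fresh region has size $(1-o(1))n$, and the maximum degree of $S_{res}$ stays bounded by a small constant (each $K_{2,2}$ insertion raises four degrees by $2$, and each wire segment adds degree at most $2$ to its endpoints). At each gadget insertion, we locate four fresh vertices forming a $K_{2,2}$ in $G$ via a standard $C_4$-counting argument applied to the fresh region, using the $m$-joined property of $G$; at each wire segment insertion, \Cref{lemma:connecting} applies because $|V(S_{res})| \le n - 10Dm - L$ (from $K$ large) and $L \ge 2\lceil \log(2m)/\log(D-1)\rceil + 1$ (from $C$ large).

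The main obstacle is the parameter bookkeeping: ensuring that the hypotheses for both the $K_{2,2}$-finding step and \Cref{lemma:connecting} hold throughout the $O(Nd) = O(N\log n)$ iterative additions, which follows from the hierarchy $1/n \ll 1/K \ll 1/C$ given in the statement together with $D \ge 100$. Once $S_{res}$ is built, the required $P_\ell$-factor for any $\sigma \colon V_1 \to V_2$ follows immediately: the permutation network $\mathcal N$ provides swap bits; selecting the two corresponding edges at each $K_{2,2}$ gadget and concatenating with the fixed wire segments produces $N$ pairwise vertex-disjoint paths on $\ell$ vertices each, each connecting some $v \in V_1$ to $\sigma(v) \in V_2$, together covering all of $V(S_{res})$, as required.
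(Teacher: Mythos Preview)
The paper does not prove this lemma; it is cited verbatim from Hyde, Morrison, M\"uyesser and Pavez-Sign\'e. Your high-level plan---realise a bounded-depth permutation network inside $G$ and read off the $P_\ell$-factor from the switch settings---is indeed the idea behind that result, but your proposed implementation has a genuine gap at the switch-gadget step.

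You want each comparator to be a literal $K_{2,2}$ on four fresh vertices, found ``via a standard $C_4$-counting argument \ldots\ using the $m$-joined property of $G$''. This step fails: the hypotheses of the lemma do not force $G$ to contain \emph{any} $C_4$. For a concrete obstruction, let $G$ be a $d$-regular Ramanujan graph with $d$ a sufficiently large constant (any $d\gg D^2$ will do). Then $\lambda=O(\sqrt d)$, so $G$ is $m$-joined for $m=\lambda n/d=O(n/\sqrt d)\le n/100D$, and by \Cref{lemma:expansion:1} every set of size at most $2m$ expands by a factor exceeding $D$, so for suitable small $V_1,V_2$ the graph $I(V_1\cup V_2)$ is $(D,m)$-extendable in $G$. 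Yet such $G$ may be chosen with girth $\Omega(\log_d n)>4$, so $G$ is $C_4$-free and no $K_{2,2}$ gadget exists anywhere. A K\H{o}v\'ari--S\'os--Tur\'an count does not rescue this either: $m$-joinedness with $m\le n/100D$ only guarantees $\Omega(Dn)$ edges, far below the $\Theta(n^{3/2})$ threshold needed to force a $C_4$ when $D$ is a constant.

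The actual construction must therefore build each comparator out of \emph{paths} supplied by \Cref{lemma:connecting}, arranged so that the identity routing and the swap routing cover exactly the same vertex set and have exactly the same length; a subdivided $K_{2,2}$ alone does not do this, since one pair of branches is left uncovered in each mode. Designing such a gadget, and checking that inserting it preserves $(D,m)$-extendability (which \Cref{lemma:connecting} only guarantees for path additions), is precisely the non-trivial content of the cited lemma that your sketch bypasses.
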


	Note that if $G$ is an $(n,d,\lambda)$-graph with $d\geq(1-\varepsilon)n$ for some $\varepsilon>0$ sufficiently small, then by Theorem~\ref{thm:main} we can already find a spanning nearly-balanced $K_t$-subdivision for all $2 \leq t \leq c\sqrt{n}$. Thus, when proving Theorem~\ref{thm:pseudorandom:1} we may assume that $d \leq (1-\varepsilon)n$, in which case it is well-known that $\lambda=\Omega(\sqrt{d})$~\cite{krivelevich2006pseudo}. This, combined with the statement of the theorem implies that $d\geq \log^6n$, provided $C$ is large enough. This allows us to use Lemma~\ref{lemma:randomsets}.
	\begin{proof}[Proof of Theorem~\ref{thm:pseudorandom:1}]We start by choosing constants 
		\[1/n_0\ll 1/C\ll\varepsilon\ll c\ll  1/K_{\ref{lemma:find_sorting_network_in_expander}}\ll 1/C_{\ref{lemma:find_sorting_network_in_expander}}\ll \alpha, 1/D\ll 1.\]
		Suppose that $n\ge n_0$ and let $G$ be an $(n,d,\lambda)$-graph with $d/\lambda\ge C\log^3 n$. Let $t\le \min\{cd,c\sqrt{\frac{n}{\log^3n}}\}$ be fixed and let $\ell=\lfloor C_{\ref{lemma:find_sorting_network_in_expander}}\log^3 n\rfloor$. Set $m=2c^2n/\log^3n$, $m'=m-\binom{t}{2}$ and $k=\lfloor(\log^3n)/(20c)\rfloor-1$, and pick disjoint random subsets $V_0,V_1,\ldots, V_{k+3}\subset V(G)$ subject to $|V_{0}|=\alpha n$, $|V_1|=\ldots=|V_k|=\varepsilon m$, and $|V_{k+1}|=|V_{k+2}|=|V_{k+3}|=m$. Then, by Lemma~\ref{lemma:randomsets}, we have w.h.p. that 
	
		\begin{enumerate}[label = $\bullet$]
			\item for every $v\in V(G)$ and $i\in [k+3]_0$, $d(v,V_i)=(1\pm 0.1)d\cdot \frac{|V_i|}{n}$.\end{enumerate}
Use Lemma~\ref{lemma:find_sorting_network_in_expander} to find a $(D,m)$-extendable subgraph $S_{res}\subset G-\cup_{i\in [k+1]_0}V_i$ of size $|S_{res}|=(\ell+1)\cdot m$ and such that $V_{k+2},V_{k+3}\subset V(S_{res})$ and 
	\stepcounter{propcounter}
	\begin{enumerate}[label =\textbf{\Alph{propcounter}}]
	\item\label{thm2:connecting} for every bijection $\sigma:V_{k+2}\to V_{k+3}$ there is a $P_\ell$-factor in $S_{res}$ such that each copy of $P_\ell$ has as its endpoints some $v\in V_{k+2}$ and $\sigma(v)\in V_{k+3}$.
	\end{enumerate}
Let $W=V(S_{res})\setminus (V_{k+2}\cup V_{k+3})$. The next claim finds a collection of vertex-disjoint stars, each of size $t$, and we omit its proof as it is identical to the proof of \Cref{claim1:thm:pseudorandom:2} in \Cref{thm:pseudorandom:2}. 

\begin{claim}
    There is a collection $\{S_1,\ldots, S_t\}$ of vertex-disjoint stars with $t-1$ leaves such that $\cup_{i\in [t]}S_i$ is disjoint from $\cup_{i\in [k+3]_0}V_i$ and $W$. 
\end{claim}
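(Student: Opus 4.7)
The plan is to follow the same maximality argument used in \Cref{claim1:thm:pseudorandom:2}, simply updated to respect the extra forbidden sets introduced here. Let $U := V(G)\setminus \bigl(\bigcup_{i\in[k+3]_0} V_i \cup W\bigr)$ and let $\{S_1,\ldots,S_s\}$ be a maximal collection of vertex-disjoint stars, each with $t-1$ leaves and entirely inside $G[U]$. Assume for contradiction that $s<t$ and set $Y := U\setminus \bigcup_{i\in[s]} V(S_i)$.

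First I would check that $Y$ still occupies almost all of $V(G)$. Since $\sum_{i\in[k+3]_0}|V_i| = \alpha n + k\varepsilon m + 3m$ and $|W| = |V(S_{res})| - 2m = (\ell-1)m$, the total number of excluded vertices is $\alpha n + k\varepsilon m + (\ell+2)m$. Plugging in $m = 2c^2 n/\log^3 n$, $k\le \log^3 n/(20c)$ and $\ell \le C_{\ref{lemma:find_sorting_network_in_expander}}\log^3 n$ gives $k\varepsilon m = O(c\varepsilon n)$ and $(\ell+2)m = O(C_{\ref{lemma:find_sorting_network_in_expander}}\,c^2 n)$, both of which are at most $\alpha n/2$ by the hierarchy $\varepsilon\ll c\ll 1/C_{\ref{lemma:find_sorting_network_in_expander}}\ll \alpha$. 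Hence $|U|\ge (1-2\alpha)n$, and subtracting at most $st \le t^2\le c^2 n/\log^3 n$ further vertices yields $|Y|\ge (1-3\alpha)n$.

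Next I would apply the expander mixing lemma (\Cref{lemma:pseudorandom}~\ref{lemma:expander}) exactly as before:
\[ e(G[Y]) \ge \tfrac{1}{2}\Bigl(\tfrac{d}{n}|Y|^2 - \lambda|Y|\Bigr) \ge \tfrac{|Y|}{2}\bigl((1-3\alpha)d - \lambda\bigr) \ge \tfrac{|Y|}{2}(1-4\alpha)d, \]
using $\lambda\le d/(C\log^3 n) \ll \alpha d$. Thus some vertex $v\in Y$ has at least $(1-4\alpha)d \ge t$ neighbours in $Y$, the last inequality from $t\le cd$ and $c\ll\alpha$. Then $v$ together with $t-1$ of its $Y$-neighbours forms a star with $t-1$ leaves that is disjoint from $S_1,\ldots,S_s$, contradicting maximality, and hence $s=t$.

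The one point that genuinely differs from \Cref{claim1:thm:pseudorandom:2} is the bookkeeping for how much of $V(G)$ remains after removing $V_0$, the reservoirs $V_1,\ldots,V_k$, and the sorting-network skeleton $V(S_{res})$; the estimates $(\ell+2)m = O(c^2 n)$ and $k\varepsilon m = O(c\varepsilon n)$ are precisely what the chosen hierarchy $c\ll 1/C_{\ref{lemma:find_sorting_network_in_expander}}\ll \alpha$ was designed to guarantee, and once this is in hand, the maximality contradiction is word-for-word the previous calculation.
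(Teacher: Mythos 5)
Your proof is correct and follows exactly the approach that the paper invokes: the paper explicitly states that the claim's proof is ``identical to the proof of Claim~\ref{claim1:thm:pseudorandom:2}'' and omits it, and your argument is precisely that maximality-plus-expander-mixing argument, with the added (and genuinely useful) bookkeeping that the forbidden set $\bigcup_{i\in[k+3]_0}V_i\cup W$ has size $O(\alpha n)$ under the hierarchy $\varepsilon\ll c\ll 1/C_{\ref{lemma:find_sorting_network_in_expander}}\ll\alpha$, so that $|Y|\ge(1-3\alpha)n$ and the mixing lemma still yields a vertex with at least $(1-4\alpha)d\ge t$ neighbours in $Y$.
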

		
		For each $i\in [t]$, let $s_i$ be the centre of $S_i$ and let $L_i=\{u_{i,j}:j\in[t]\setminus\{i\}\}$ be the set of leaves of $S_i$. We will use $s_1,\ldots, s_t$ as the branch vertices of the $K_t$-subdivision. Let 
 $V'=\cup_{i\in [k]}V_i\cup W\cup\{s_1,\ldots, s_t\}$, $Z=\cup_{i\in [t]}L_i \cup V_{k+1}\cup V_{k+2}\cup V_{k+3}$, and set $G'=G-V'$. 
 
\begin{claim}
    $I(Z)$ is $(D,\frac{\lambda n}{d})$-extendable in $G'$.
\end{claim}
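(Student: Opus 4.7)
The plan is to verify extendability using the simpler sufficient condition of \Cref{prop:weakerextendability}. Since $I(Z)$ has no edges at all, $\Delta(I(Z))=0\le D$ trivially, so it suffices to show that every $U\subset V(G')$ with $1\le |U|\le 2\lambda n/d$ satisfies $|N_{G'}(U)\setminus Z|\ge D|U|$.

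The key observation is that the set $V_0$ was reserved for precisely this step. By construction, $V_0$ is disjoint from each $V_i$ for $i\in [k+3]$, from $V(S_{res})\supset W$, and from the centres $s_i$ and leaves $L_i$ (which lie outside every $V_i$ and $W$). Hence $V_0\cap V'=\emptyset$ and $V_0\cap Z=\emptyset$, i.e.\ $V_0\subset V(G')\setminus Z$. Consequently, every external $V_0$-neighbour of $U$ in $G$ is automatically an external neighbour of $U$ in $G'$ lying outside $Z$: $(N_G(U)\cap V_0)\subset N_{G'}(U)\setminus Z$. So it is enough to prove $|N_G(U)\cap V_0|\ge D|U|$.

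For this I would invoke \Cref{lemma:expansion:1} with $X=V_0$. The random-partition bullet gives $d(v,V_0)\ge 0.9\alpha d$ for every $v\in V(G)$, and the hypothesis $d/\lambda\ge C\log^3 n$ (together with the chosen hierarchy $1/C\ll\alpha,1/D$) makes this at least $3(D+1)\lambda$. Then \Cref{lemma:expansion:1}, applied with $k_{\ref{lemma:expansion:1}}=3$ and $D_{\ref{lemma:expansion:1}}=D+1$, yields $|N_G(U)\cap V_0|\ge D|U|$ for every $U$ with $|U|\le 2\lambda n/d$, which is exactly the required bound.

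There is no substantial obstacle here — the argument was set up upfront by carving off a linear-sized $V_0$ into which every vertex expands. The only care needed is in the bookkeeping: one has to keep track that $V_0$ sits on the correct side of every distinction between $V'$, $Z$, and $V(G')$, which is immediate from the construction of those sets.
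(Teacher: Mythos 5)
Your proof is correct and follows essentially the same route as the paper's: reduce to the external-neighbourhood condition via \Cref{prop:weakerextendability}, then apply \Cref{lemma:expansion:1} with $X=V_0$, $k=3$, $D_{\ref{lemma:expansion:1}}=D+1$, using the degree bound $d(v,V_0)\ge 0.9\alpha d$ from the random partition. The only difference is that you spell out the bookkeeping that $V_0\subset V(G')\setminus Z$, which the paper leaves implicit.
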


\begin{claimproof}
    By Proposition~\ref{prop:weakerextendability}, it suffices to show that for any $U \subset V(G')$ of size $1 \leq |U| \leq \frac{2\lambda n}{d}$, it holds true that $|N_{G'}(U)\setminus Z|\geq D|U|$. Recall that for every vertex $v \in V(G')$ we have that $d_G(v,V_0)\geq 0.9 \alpha d$. An application of Lemma~\ref{lemma:expansion:1} with $D_{\ref{lemma:expansion:1}}=D+1$ and $k_{\ref{lemma:expansion:1}}=3$ completes the proof as then for any $U \subset V(G')$ with $|U| \leq \frac{2\lambda n}{d}$, we have that $|N_{G'}(U)\setminus Z|\ge |N_G(U)\cap V_0|\geq D |U|.$
    
    
\end{claimproof}

We now briefly describe how we will connect the branch vertices $s_i$ and $s_j$. Clearly, it suffices to do so for $u_{i,j} \in L_i$ and $u_{j,i}\in L_j$. We only give an overview at this point, and postpone exact definitions until later. 
\newline

\begin{tikzpicture}
    \node (A) at (0,0) {$u_{i,j}$};
    \node (B) at (1.8,0) {$V_{k+1}$};
    \node (C) at (3.6,0) {$V'_1$};
    \node (D) at (5.4,0) {$V'_2$};
    \node (E) at (7.2,0) {$\cdots$};
    \node (F) at (9,0) {$V'_k$};
    \node (G) at (10.8,0) {$V_{k+2}$};
    \node (H) at (12.6,0) {$V_{k+3}$};
    \node (J) at (14.4,0) {$u_{j,i}$};

    \draw [->] (A) -- (B) node[midway, above] {$q_{i,j}$};
    \draw [->] (B) -- (C) node[midway, above] {$1$};
    \draw [->] (C) -- (D) node[midway, above] {$1$};
    \draw [->] (D) -- (E) node[midway, above] {$1$};
    \draw [->] (E) -- (F) node[midway, above] {$1$};
    \draw [->] (F) -- (G) node[midway, above] {$1$};
    \draw [->] (G) -- (H) node[midway, above] {$\ell$};
    \draw [->] (H) -- (J) node[midway, above] {$q_{j,i}$};
    \draw [->] (H) .. controls (7.2,-2) .. (B) node[midway, above] {$100\log n$};
    \node []  at (7.2,-2.1) {repeat $|J_{\{i,j\}}|$ times};
\end{tikzpicture}

The numbers above the arrows indicate the lengths of the corresponding path segments. The sets $V'_i$,  $i \in \left[ k \right]$, will satisfy $V_i \subset V'_i$ and $|V'_i|=m$, and the path lengths $q_{i,j}$, $q_{j,i} \in \mathbb{N}$ will be appropriately chosen as to ensure that the final spanning subdivision is nearly-balanced. The backward arrow indicates that we may visit some vertex parts multiple times (if $|J_{\{i,j\}}|$ is non-zero).

We let $\mathbb{M}=100\log n +\ell + k+1$, and note that this is the length of a single \textit{loop segment} and that $\mathbb{M} \leq \log^3n/(10c)$. Furthermore, observe that if the above sketch is true, then the length of the branching path from $u_{i,j}$ to $u_{j,i}$ will be equal to $q_{i,j}+q_{j,i}+ |J_{\{i,j\}}| \cdot \mathbb{M}+\ell +k +1$.

We now connect the vertices from $\cup_{i\in [t]}L_i$ with $V_{k+1}$ and $V_{k+3}$. To do so, we first pick subsets $A\subset V_{k+1}$ and $B\subset V_{k+3}$ of size $|A|=|B|=m':=m-\binom{t}{2}$. Letting $a_1,\ldots, a_{m'}$ and $b_1,\ldots, b_{m'}$ be a labelling of $A$ and $B$, respectively, use Lemma~\ref{lemma:connecting} to find a collection of paths $P_1,\ldots, P_{m'}$ such that, for each $i\in [m']$,

\begin{itemize}
	\item $P_i$ is an $(a_i,b_i)$-path of length $100\log n$, 
    \item $\text{int}(P_i)$ is disjoint from $I(Z)+P_1+\ldots+P_{i-1}$, and
	\item $I(Z)+P_1+\ldots +P_{i}$ is $(D,m)$-extendable.
\end{itemize}
This is clearly possible, as $m'\le m\le 2c^2n/\log^3n$ implies 

\begin{equation}\label{thm2:equation:size:Z}
    |I(Z)+P_1+\ldots +P_{m'}|\le t(t-1)+3m+m'\cdot 100\log n\le 10c^2n.
\end{equation}

Let $[m']=\cup_{\{i, j\}\in \binom{[t]}{2}} J_{\{i,j\}}$ be a partition so that $||J_{\{i,j\}}|-|J_{\{i',j'\}}||\le 1$ for all $\{i,j\}, \{i',j'\}\in \binom{[t]}{2}$. Pick a collection of positive integers $(q_{i,j})_{i\neq j\in[t]}$ such that for all $i\neq j\in [t]$ and $a\neq b \in [t]$,

\stepcounter{propcounter}
	\begin{enumerate}[label = \textbf{\Alph{propcounter}\arabic{enumi}}]
 \item\label{thm2:length:paths} $100\log^3 n\le q_{i,j},q_{j,i}\le 0.99n$,
 \item \label{thm2:samelength} $\Big|\left(q_{i,j} +q_{j,i}+|J_{\{i,j\}}|\cdot \mathbb{M}\right)-\left(q_{a,b}+q_{b,a}+|J_{\{a, b\}}| \cdot \mathbb{M}\right)\Big|\le 1$, and
    \item\label{thm2:divisibility} $n-|V'|-|Z|-m'(100\log n-1)- \sum_{1\leq i<j \leq t}\left((q_{i,j}-1)+(q_{j,i}-1)\right)=(1-\varepsilon)km$.
\end{enumerate}

This is possible as the number of leftover vertices after the $m'$ paths of length $100\log n$ are embedded is more than $n-|V'|-|Z|-100m'\log n\ge 0.8n$ and we want to find a collection of $t(t-1)\le c^2n/\log^3 n$ numbers, each of them greater than $100\log^3 n$.
Moreover, note that only the sum $(q_{i,j}+q_{j,i})$ matters, as one could increase either at the cost of decreasing the other and the final branching path's length will remain the same. Also, for each $i\neq j\in[t]$, $|J_{\{i,j\}}|=x~~ \textrm{or} ~~x+1$ for some number $x$ and this follows from construction. Hence, we must have that, for each $i\neq j \in [t]$, the number $(q_{i,j}+q_{j,i})$ contained in either the set $\{y, y+1\}$ or $\{y + \mathbb{M}, y + \mathbb{M} + 1\}$ for some number $y$, in order to satisfy~\ref{thm2:samelength}. From then on, we may `increase' $y$ so that condition~\ref{thm2:divisibility} is also satisfied.

Set $\mathcal Q=\emptyset $ and $F=I(Z)+P_1+\ldots +P_{m'}$, and iteratively, while $\mathcal Q\not=\binom{[t]}{2}$, do the following. Choose a pair $\{i, j\} \in\binom{[t]}{2}\setminus\mathcal Q$ and pick some unused vertices $u'_{i,j}\in V_{k+1}\setminus A$ and $u'_{j,i}\in V_{k+3}\setminus B$, and, if possible, perform the following step. 
\stepcounter{propcounter}
\begin{enumerate}[label =\textbf{\Alph{propcounter}$_{ij}$}]
    \item\label{thm2:step:paths} Find vertex-disjoint paths $Q_{j,i}$ and $Q_{i,j}$ such that $Q_{j,i}$ is a $(u_{j,i},u'_{j,i})$-path of length $q_{j,i}$, $Q_{i,j}$ is a $(u_{i,j},u'_{i,j})$-path of length $q_{i,j}$, $Q_{i,j}$ and $Q_{j,i}$ are internally disjoint from $F$, and $F+Q_{i,j}+ Q_{j,i}$ is $(D,\frac{\lambda n}{d})$-extendable in $G'$. Then we update the set $Q$ and the graph $F$ as $\mathcal Q:=\mathcal Q\cup\{ij\}$ and $F:=F+ Q_{i,j}+ Q_{j,i}$.
\end{enumerate}
We now show that it will be always possible to perform step~\ref{thm2:step:paths}. Indeed, by~\ref{thm2:divisibility}, we know that there are always at least $(1-\varepsilon)km\geq cn/20$ many unused vertices in $G'$. Thus, by Lemma~\ref{lemma:connecting} we may find the desired paths $Q_{i,j}$ and $Q_{j,i}$ to complete step~\ref{thm2:step:paths}.

Note that to finish the embedding of the $K_t$-subdivision, we only need to connect, for each $\{i,j\}\in \binom{[t]}{2}$, the vertices $u'_{ij}\in V_{k+1}$ and $u'_{ji}\in V_{k+3}$. Let $U$ denote the set of vertices that we have not yet used in $G'$. By~\ref{thm2:divisibility}, $|U|=(1-\varepsilon)km$ and so we may distribute these left-over vertices to the sets $V_1,V_2,\ldots,V_k$, to form pairwise disjoint sets $V_1',V_2',\ldots,V_k'$ satisfying
\begin{itemize}
    \item $|V'_1|=\ldots=|V'_k|=m$, 
    \item $V_i\subset V'_i$ for each $i\in [k]$, 
    \item $\left(V'_1\cup\ldots\cup V_k'\right)\setminus\left(V_1,\ldots,V_k\right)=V(G')\setminus V(F)$, and
    \item for every vertex $v\in V(G)$, $d(v,V'_i)\ge 0.9d\cdot \frac{|V_i|}{n}\ge \frac{\varepsilon c^2d}{2\log^3 n}$.
\end{itemize}
Use Lemma~\ref{lemma:matching}, iteratively, to find a collection of matchings $M_0, M_1,\ldots, M_{k+1}$ such that 
\begin{itemize}
    \item $M_0$ is a perfect matching in $G[V_{k+1},V'_1]$,
    \item for $i\in [k-1]$,  $M_i$ is a perfect matching in $G[V'_i,V'_{i+1}]$, and 
    \item $M_{k+1}$ is a perfect matching in $G[V'_k,V_{k+2}]$. 
\end{itemize}
The union of these $k+2$ matchings gives a family of $m$ vertex-disjoint paths of length $k+1$ so that each such path has as endpoints some vertex from $V_{k+1}$ and some vertex from $V_{k+2}$. It is thus left only to connect the endpoints of those paths in a suitable order to finish the embedding. 

To do so, we first partition $B$ into sets $B=\cup_{(i,j)\in \binom{[t]}{2}}B_{(i,j)}$, where $B_{(i,j)}=\{b^{(i,j)}_1,\ldots,b^{(i,j)}_{|J_{\{i,j\}}|}\}$. For each $(i,j)\in \binom{[t]}{2}$, let $v'_{i,j}\in V_{k+2}$ be the other endpoint of the path starting at $u'_{i,j}\in V_{k+1}$ and using edges from $M_0,\ldots, M_{k+1}$. For each $(i,j)\in \binom{[t]}{2}$, and $1\le z\le |J_{\{i,j\}}|$, let $c^{(i,j)}_z\in V_{k+2}$ be the other endpoint of the path which starts at $b^{(i,j)}_z$, goes through the corresponding path $P_l$, $l\in [m']$, and then uses edges from the matchings $M_0, M_1,\ldots, M_{k+1}$. Let $\sigma:V_{k+2}\to V_{k+3}$ be the permutation defined as follows: for each $(i,j)\in \binom{[t]}{2}$, set $\sigma(v'_{i,j})=b^{(i,j)}_1$, $\sigma(c_{z}^{(i,j)})=b^{(i,j)}_{z+1}$ for each  $1\le z<|J_{\{i,j\}}|$, and $\sigma(c_{|J_{\{i,j\}}|}^{(i,j)})=u'_{j,i}$. Then, using property~\ref{thm2:connecting}, we can complete the embedding of the $K_t$-subdivision.
\end{proof}


\subsection{Proof of Theorem~\ref{thm:main}}
The proof of Theorem~\ref{thm:main} uses the \textit{absorption method} as implemented by Montgomery~\cite{montgomery2019spanning} in his solution of Kahn's conjecture on the threshold of bounded degree spanning trees in random graphs.
	
\begin{definition}
    Given a graph $G$ and a vertex $v\in V(G)$, an absorber for $v$ is a tuple $(A,x,y)$ that consists of a subset $A\subset V(G-v)$ and vertices $x,y\in A$ such that 
    \stepcounter{propcounter}
	\begin{enumerate}[label=\textbf{\Alph{propcounter}}]
		\item\label{absorber:property1} both $G[A]$ and 
		      $G[A\cup \{v\}]$ contain a Hamiltonian $(x,y)$-path.
	\end{enumerate}
    The $(x,y)$-path $P_{x,y}$ with $V(P_{x,y})=A$ is called an \textit{absorbing path} for $v$. 
\end{definition}
	
Suppose that $(A,x,y)$ and $(A',x',y')$ are disjoint absorbers for vertices $v$ and $v'$, respectively. The key observation here is that if $P$ is an $(y,x')$-path which avoids $A\cup A'\cup\{v,v'\}$, then $(A\cup A'\cup V(P),x,y')$ is an absorber for both $v$ and $v'$. The main idea introduced by Montgomery~\cite{montgomery2019spanning} was that of using a sparse \textit{template} for combining absorbers of different vertices so that every single absorber is capable of `robustly' absorbing a `large' set of vertices. The following key lemma gives the existence of the template. 

\begin{lemma}[{\cite[Lemma 10.7]{montgomery2019spanning}}]\label{lemma:template}
    There is a constant $n_0$ such that, for every $n\geq n_0$ and $k\leq n$, there exists a bipartite graph H with $\Delta(H) \leq 100$ and bipartition classes $X$ and $Y \cup Z$, with $|X|=3n$ and $|Y|=2n$ and $|Z|=n+k$, such that the following holds. For every $Z' \subseteq Z$ with size $|Z'|=n$, there exists a perfect matching between $X$ and $Y \cup Z'$.
\end{lemma}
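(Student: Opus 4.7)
The plan is to prove this template lemma via a probabilistic construction. I would define $H$ as a random bipartite graph in which each vertex $x\in X$ independently picks $d_Y$ uniformly random neighbours in $Y$ and $d_Z$ uniformly random neighbours in $Z$, for small constants with $d_Y+d_Z\le 100$. The first step is to reformulate the conclusion as a Hall-type condition that no longer depends on $Z'$: since $|X|=3n=|Y\cup Z'|$, by Hall's theorem a perfect matching between $X$ and $Y\cup Z'$ exists if and only if $|N_H(S)\cap (Y\cup Z')|\ge |S|$ for every $S\subseteq X$. Taking the minimum over all $Z'\subseteq Z$ of size $n$ gives $\min_{Z'}|N_H(S)\cap Z'|=\max\{0,|N_H(S)\cap Z|-k\}$, so the lemma reduces to proving that for every $S\subseteq X$,
\[
|N_H(S)\cap Y|+\max\{0,|N_H(S)\cap Z|-k\}\ge |S|.
\]

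I would then split into regimes according to $|S|$. For small $S$ (say $|S|\le \tfrac{2n}{3}$), I would verify strong expansion into $Y$: a random bipartite graph with constant $X$-degree $d_Y$ expands small sets by a factor close to $d_Y$, so $|N_H(S)\cap Y|\ge |S|$ holds with failure probability exponentially small in $|S|$. For large $S$ (say $|S|\ge n$), I would show that $N_H(S)\cap Z$ essentially exhausts $Z$ with huge probability, whence $|N_H(S)\cap Z|-k\ge n$ and the inequality reduces to $|N_H(S)\cap Y|\ge |S|-n$; this again follows from expansion (or from $|N_H(S)\cap Y|\ge |Y|-o(n)$ when $|S|$ is close to $3n$). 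An intermediate regime is covered by a common-sense combination: once $|N_H(S)\cap Z|$ is at least $\tfrac{|S|+k}{2}$ the inequality is cheap, otherwise the small-set expansion carries the rest.

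The main obstacle will be executing the union bound over subsets of $X$: to control the $\binom{3n}{s}$ choices of $S$ of each size $s$, the failure probability of the relevant expansion event at a fixed $S$ must be of order $e^{-\Omega(s\log(3n/s))}$, which forces a careful tuning of $d_Y$ and $d_Z$ against the required expansion factor and the total budget of $100$. A secondary technical nuisance is that the random construction above only directly bounds degrees on the $X$-side; on the $Y\cup Z$-side the random degrees are Poisson-like and their maximum drifts like $\Theta(\log n / \log\log n)$, violating the $\Delta(H)\le 100$ guarantee. To fix this I would either build $H$ as a union of a constant number of random partial matchings between $X$ and $Y$ and between $X$ and $Z$ (so that both sides have deterministically bounded degree by design, at the modest cost of re-deriving the expansion estimates for this slightly different model), or add a post-processing step that trims excess-degree vertices on $Y\cup Z$ and checks that the expansion inequalities survive removing the constantly many trimmed edges incident to each vertex of $X$.
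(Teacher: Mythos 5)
Your overall strategy --- a probabilistic construction verified via Hall's theorem and a union bound over subsets of $X$, with bounded degree on both sides ensured by building $H$ as a union of a constant number of random partial matchings --- is the standard way to prove template lemmas of this kind, and I believe it is essentially the route Montgomery takes. Your Hall reformulation $|N_H(S)\cap Y|+\max\{0,|N_H(S)\cap Z|-k\}\ge|S|$ for all $S\subseteq X$ correctly eliminates the quantifier over $Z'$, and you have identified the two genuine technical obstacles (a failure probability of order $e^{-\Omega(s\log(3n/s))}$ to beat the union bound, and degree control on the $Y\cup Z$ side) along with a workable fix for the second.

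The large-$|S|$ regime as you describe it, however, does not go through. The inequality $|N_H(S)\cap Z|-k\ge n$ is equivalent to $|N_H(S)\cap Z|\ge n+k=|Z|$, i.e. $N_H(S)\supseteq Z$ \emph{exactly}; ``$N_H(S)\cap Z$ essentially exhausts $Z$'' (covers $(1-o(1))|Z|$) does not give this, and indeed for $|S|=n$ and constant $X$-degree a constant fraction of $Z$ remains uncovered in expectation, so the reduction to $|N_H(S)\cap Y|\ge|S|-n$ is unjustified. The intermediate heuristic is also off: if $|N_H(S)\cap Z|\ge(|S|+k)/2$ you still need $|N_H(S)\cap Y|\ge(|S|+k)/2$, which is not a priori any easier than the original inequality. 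The correct way to treat large $S$ is to bound the \emph{defects} $d_1=2n-|N_H(S)\cap Y|$ and $d_2=(n+k)-|N_H(S)\cap Z|$ and show $d_1+d_2\le 3n-|S|$; equivalently, since $|X|=|Y\cup Z'|$, apply Hall from the $Y\cup Z'$ side and verify small-set expansion of $T\subseteq Y\cup Z$ into $X$, which mirrors your small-$S$ argument and so comes essentially for free with a symmetric construction. Note finally that $S=X$ forces every vertex of $Y\cup Z$ to have degree at least one in $H$; this should be stated explicitly, though your union-of-matchings model delivers it deterministically provided the matchings are chosen to saturate $Y$ and $Z$. With these repairs your sketch should close.
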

	
The last ingredient we need is that $m$-joined graphs contain almost spanning paths, a result which is obtained by a simple \textit{depth-first search} analysis of the graph.
	
\begin{lemma}\label{lemma:long-path}
    If $G$ is an $m$-joined graph, then $G$ contains a path of length at least $|G|-2m$.
\end{lemma}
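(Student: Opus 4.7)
The plan is to use a standard depth-first search (DFS) analysis. I will run DFS on $G$ starting from an arbitrary vertex (and restarting in a new component when necessary), maintaining three pairwise disjoint subsets throughout the execution: the set $S$ of vertices whose exploration has finished (popped from the stack), the set $P$ of vertices currently on the stack, and the set $T = V(G) \setminus (S \cup P)$ of vertices not yet visited. The central structural fact about DFS that I will exploit is twofold: first, the stack $P$, read from bottom to top, always forms a path in $G$; second, at every moment there is no edge between $S$ and $T$, because when a vertex is moved into $S$ all its neighbours have already been discovered.

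Assuming $|G| > 2m$ (otherwise the claim is trivial since $|G|-2m \le 0$), I will consider the first moment at which $|S| = m$. Such a moment exists because $|S|$ is non-decreasing, changes by at most one per step, starts at $0$, and eventually reaches $|G| \ge 2m$. At this moment, if we had $|T| \ge m$, then $S$ and $T$ would be disjoint sets of size at least $m$ with no edge between them, contradicting the hypothesis that $G$ is $m$-joined. Hence $|T| \le m-1$, and combining this with the identity $|S| + |P| + |T| = |G|$ gives $|P| \ge |G| - 2m + 1$. Since $P$ is a path in $G$ on at least $|G|-2m+1$ vertices, it has length at least $|G|-2m$, which is exactly what we need.

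I do not expect any real obstacle: this is a textbook DFS argument. The only minor subtlety is the behaviour across restarts if $G$ were disconnected, but in our regime $m$-joined together with $|G|\ge 2m$ actually forces $G$ to be connected (two components of size at least $m$ would yield disjoint $m$-sets with no edge between them), so the restart issue does not arise.
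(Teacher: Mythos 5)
Your DFS argument is correct and is exactly the ``simple depth-first search analysis'' that the paper alludes to without spelling out; the core reasoning (track $S$, $P$, $T$; note $P$ is a path and there are no $S$--$T$ edges; stop at the first moment $|S|=m$) is sound and gives $|P|\ge|G|-2m+1$ vertices, hence length at least $|G|-2m$.

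The one thing to fix is the closing remark: an $m$-joined graph on at least $2m$ vertices need \emph{not} be connected. The paper's own introductory example of $K_{n-m+1}$ together with $m-1$ isolated vertices is $m$-joined (any two disjoint $m$-sets both meet the clique) yet disconnected. What your parenthetical argument actually rules out is having two components each of size at least $m$, which is strictly weaker than connectivity. Luckily the error is harmless: your main argument never uses connectivity, since the two DFS invariants --- $P$ is a path, and there is no edge between $S$ and $T$ --- persist across restarts (when the stack empties, $P$ becomes a single new vertex from $T$, and no $S$--$T$ edges are created). So the proof goes through verbatim on disconnected graphs; you should simply delete the connectivity claim rather than rely on it.
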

	
We are now ready for the proof.

\begin{proof}[Proof of Theorem~\ref{thm:main}]	
    We start by choosing constants 
    \[0 < 1/n_0\ll c\ll\varepsilon  \ll \theta \ll \alpha\ll \mu < 1.\] 

    Suppose that $n\ge n_0$ and that $G$ is an $n$-vertex graph which is $\varepsilon n$-joined and has minimum degree $\delta(G)\geq \mu n$.  During the proof, we will repeatedly use the following observation. Let $U, Z\subset V(G)$ and let $v,v'$ be distinct vertices.

    \stepcounter{propcounter}
	\begin{enumerate}[label = \textbf{\Alph{propcounter}}]
            \item\label{property:connecting} If $d(v,U),d(v',U)>|Z|+4\varepsilon n$, then there is a $v,v'$-path of length 3 whose interior vertices belong to $U\setminus Z$.
	\end{enumerate}
    
    Indeed, if $d(v,U),d(v',U)>|Z|+4\varepsilon n$,  we can find disjoint sets $X\subset N(v,U)\setminus(Z\cup\{v'\})$ and $Y\subset N(v',U)\setminus(Z\cup\{v\})$ such that  $|X|=|Y|=\varepsilon n$, and then find an edge between $X$ and $Y$ using that $G$ is $\varepsilon n$-joined.\\
		
    \noindent\textit{Step 1. Fixing the reservoirs.} 
    Let $1\le t\le c\sqrt{n}$ be fixed from now on, and let $r=\theta n$ and $k=4\cdot \binom{t}{2}+8\varepsilon n$. We start by setting aside three disjoint random sets $R_1, R_2, R_3\subset V(G)$ with the following properties.
		
    \stepcounter{propcounter}
	\begin{enumerate}[label = \textbf{\Alph{propcounter}\arabic{enumi}}]
            \item\label{thm1:size} $|R_1|=2r$, $|R_2|=r +k$, and $|R_3|=\alpha n$.
            \item\label{thm1:degree:reservoir}  For all $v\in V(G)$ and $i\in[3]$, $d(v,R_i)\ge\frac{\mu |R_i|}{2}$. 
            \item\label{thm1:degree:graph} For all $v\in V(G)$, $d(v,V(G)\setminus \cup_{i\in [3]}R_i)\ge \frac{\mu n}{2}$.
	\end{enumerate}
  
    \noindent\textit{Step 2. Constructing the absorbers.} 
    Our aim now is to construct a collection of absorbing paths that is capable of absorbing $R_1$ together with any set of $r$ vertices from $R_2$. To do so, we first construct a collection of 100 vertex-disjoint absorbers for each vertex in $R_1\cup R_2$. 
		
    \begin{claim}\label{clm:disjoint-absorbers}
        For each $v\in R_1 \cup R_2$ there exists a collection $\{(\{a_{v,i},b_{v,i}\},a_{v,i},b_{v,i})\}_{i\in [100]}$ of absorbers for $v$ such that $a_{v,i},b_{v,i}\in V(G)\setminus \cup_{j\in [3]}R_j$ and all the absorbers are pairwise disjoint.
    \end{claim}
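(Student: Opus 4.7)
The plan is to unpack what a size-$2$ absorber actually is and then build the $100$ absorbers greedily, using only the joined property together with the minimum degree guarantees on the complement of the reservoirs. Writing $A=\{a,b\}$ with $x=a$, $y=b$, a Hamiltonian $(a,b)$-path in $G[A]$ is just the edge $ab$, and a Hamiltonian $(a,b)$-path in $G[A\cup\{v\}]$ is the path $avb$. Hence $(\{a,b\},a,b)$ is an absorber for $v$ if and only if $a,b\in N(v)$ and $ab\in E(G)$; equivalently, each absorber corresponds to an edge of $G$ inside $N(v)$. The task therefore reduces to finding, for each $v\in R_1\cup R_2$, a collection of $100$ pairwise vertex-disjoint edges of $G$ contained in $N(v)\setminus\bigcup_{j\in[3]}R_j$, with these edges also being disjoint from the edges chosen for all other vertices in $R_1\cup R_2$.

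I will process the vertices $v\in R_1\cup R_2$ in an arbitrary order, and for each $v$ pick the $100$ edges one at a time. At every step, let $U$ denote the set of all vertices used so far in previously chosen absorbers. A crude bound gives $|U|\le 200|R_1\cup R_2|$. Since $|R_1\cup R_2|\le 3r+k\le 3\theta n+4\binom{t}{2}+8\varepsilon n$, and $c\ll\varepsilon\ll\theta\ll\mu$, we have $|U|\le 800\theta n\le \mu n/6$.

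To pick the next absorber for $v$, consider $N':=N(v)\setminus\bigl(\bigcup_{j}R_j\cup U\cup\{v\}\bigr)$. By property~\ref{thm1:degree:graph}, $d(v,V(G)\setminus\bigcup_j R_j)\ge \mu n/2$, so $|N'|\ge \mu n/2-|U|\ge \mu n/3$. Because $\varepsilon\ll\mu$, we can choose disjoint subsets $A_0,B_0\subseteq N'$ with $|A_0|=|B_0|=\varepsilon n$. The $\varepsilon n$-joined property of $G$ then produces an edge $a_{v,i}b_{v,i}$ between $A_0$ and $B_0$, and by construction $a_{v,i},b_{v,i}\in N(v)\setminus\bigcup_j R_j$ and are disjoint from all previously chosen absorbers. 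Updating $U$ to include the two new vertices preserves the bound $|U|\le 800\theta n$ throughout, so the procedure never fails and produces the required $100$ absorbers for every $v\in R_1\cup R_2$.

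There is no real obstacle: the argument is just a greedy edge extraction, and the only thing to verify is that the accumulated budget of forbidden vertices stays comfortably below the minimum degree guarantee on $V(G)\setminus\bigcup_j R_j$, which follows from the hierarchy $c,\varepsilon\ll\theta\ll\mu$ together with the bound $t\le c\sqrt n$ controlling $k=4\binom{t}{2}+8\varepsilon n$.
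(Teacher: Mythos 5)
Your proof is correct and follows essentially the same approach as the paper's: you both observe that a size-$2$ absorber for $v$ is exactly a triangle through $v$, then greedily harvest $100$ disjoint such triangles for each $v \in R_1 \cup R_2$ by combining the minimum-degree bound from~\ref{thm1:degree:graph} with the $\varepsilon n$-joined property to extract an edge inside the available part of $N(v)$. The bookkeeping of the forbidden set $U$ and the hierarchy $c,\varepsilon \ll \theta \ll \mu$ is handled in the same way.
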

	
    \begin{claimproof}[Proof of \Cref{clm:disjoint-absorbers}] 
        We observe first that an absorber of size 2 for $v$ is just a triangle supported at $v$. We will find the triangles greedily using~\ref{thm1:degree:graph} and the joinedness condition. Indeed, firstly note that for every vertex $v\in V(G)$ we have 
    
        \begin{equation}\label{eq:triangle}
            d(v,V(G)\setminus\cup_{i\in [3]}R_i)\overset{\ref{thm1:degree:graph}}{\ge}\frac{\mu n}{2}\ge \frac{\mu n}{4}+200|R_1\cup R_2|,
        \end{equation}
        as $1/n_0\ll \alpha,\theta\ll \mu$.
			
        Secondly, for each vertex $v\in R_1\cup R_2$, use that $G$ is $\varepsilon n$-joined to find $100$ vertex-disjoint edges in the neighbourhood of $v$, disjoint from all the other choices and disjoint from $\cup_{i\in [3]}R_i$. This is possible as 
        \[d(v,V(G)\setminus \cup_{i\in [3]}R_i)\overset{\eqref{eq:triangle}}{\ge} \frac{\mu n}{4} +200 |R_1 \cup R_2|\ge 2\varepsilon n+200|R_1\cup R_2|.\]
        So, for a vertex $v\in R_1\cup R_2$, at each step we can pick disjoint subsets $X_v, Y_v\subset N(v)\setminus \cup_{i\in [3]}R_i$ of unused vertices such that $|X_v|=|Y_v|=\varepsilon n$, and then pick an edge between $X_v$ and $Y_v$ using that $G$ is $\varepsilon n$-joined.
    \end{claimproof}  

    Our aim now is to suitably glue these absorbers to create robust absorbers.
    
    \stepcounter{propcounter}
	\begin{claim}\label{clm:absorbing-paths}
            There exists an auxiliary bipartite graph $H$ with vertex classes $[3r]$ and $R_1\cup R_2$, and a collection of vertex-disjoint paths $Q_1,\ldots, Q_{3r}$ in $G$  such that $\cup_{i\in [3r]}V(Q_i)$ is disjoint from $\cup_{i\in [3]}R_i$, 
            \begin{enumerate}[label = \upshape\textbf{\Alph{propcounter}\arabic{enumi}}]	
				\item\label{absorber:2} $Q_i$ is an absorbing path of size $1000$ for each $v\in N_H(i)$ and $i\in [3r]$, and
				\item\label{absorber:3} for every subset $R_2'\subset R_2$ of size $|R_2'|=r$, there is a perfect matching in $H$ between $R_1\cup R_2'$ and $[3r]$. 
		\end{enumerate}
	\end{claim}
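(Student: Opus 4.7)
The plan is to apply \Cref{lemma:template} to obtain the auxiliary bipartite graph $H$, and then to construct each path $Q_i$ by chaining together fresh absorbers from \Cref{clm:disjoint-absorbers} via short connecting paths supplied by observation \ref{property:connecting}.

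First, I apply \Cref{lemma:template} with $n_{\ref{lemma:template}} = r$ and $k_{\ref{lemma:template}} = k$ to obtain a bipartite graph $H$ with $\Delta(H) \leq 100$ and bipartition classes of sizes $3r$ and $2r + (r+k)$. I identify the first class with $[3r]$ and the second with $R_1 \cup R_2$ so that the part of size $2r$ corresponds to $R_1$ (using $|R_1|=2r$ from \ref{thm1:size}) and the part of size $r+k$ corresponds to $R_2$. Property \ref{absorber:3} then follows immediately from the conclusion of \Cref{lemma:template} applied to any subset $R_2' \subset R_2$ of size $r$.

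Next, I process $i = 1, 2, \ldots, 3r$ in order and construct $Q_i$ as follows. Write $N_H(i) = \{v_1, \ldots, v_s\}$ with $s \leq 100$. For each $j \in [s]$, I select an absorber $(\{a_j, b_j\}, a_j, b_j)$ for $v_j$ from the $100$ pairwise disjoint absorbers provided by \Cref{clm:disjoint-absorbers} that has not yet been used in any earlier $Q_{i'}$; since each vertex $v \in R_1 \cup R_2$ satisfies $d_H(v) \leq 100$, such an unused absorber is always available. Using observation \ref{property:connecting}, I find connecting paths $P_1, \ldots, P_{s-1}$, where $P_j$ joins $b_j$ to $a_{j+1}$, has interior vertices in $V(G) \setminus \cup_{\ell \in [3]} R_\ell$, and avoids every vertex used so far. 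I then concatenate to obtain
\[
Q_i \;=\; a_1 b_1 \, P_1 \, a_2 b_2 \, P_2 \,\cdots\, a_s b_s,
\]
which I pad up to size exactly $1000$ by extending one of the $P_j$'s via further applications of \ref{property:connecting} through unused vertices in $V(G) \setminus \cup_\ell R_\ell$. Clearly $G[V(Q_i)]$ contains the Hamilton $(a_1, b_s)$-path $Q_i$ itself, and for any $v_j \in N_H(i)$, replacing the edge $a_j b_j$ by the detour $a_j v_j b_j$ (which exists since $(\{a_j,b_j\},a_j,b_j)$ is an absorber for $v_j$) yields a Hamilton $(a_1, b_s)$-path of $G[V(Q_i) \cup \{v_j\}]$, verifying \ref{absorber:2}.

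The main obstacle is checking that observation \ref{property:connecting} may be applied at every step of the construction. Taking $U = V(G) \setminus \cup_{\ell \in [3]} R_\ell$ and letting $Z$ be the (growing) set of previously chosen absorber endpoints and connecting-path interiors, a crude count gives $|Z| \leq 200 \cdot |R_1 \cup R_2| + 1000 \cdot 3r \leq 10^4 \theta n$. By \ref{thm1:degree:graph}, each endpoint $a_j$ or $b_j$ has at least $\mu n / 2$ neighbours in $U$, and since $\theta \ll \mu$ we have $\mu n / 2 > |Z| + 4\varepsilon n$ throughout. Hence \ref{property:connecting} applies at every step, producing all required connecting and padding paths, and the construction of $Q_1, \ldots, Q_{3r}$ succeeds.
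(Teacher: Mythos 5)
Your proposal is correct and follows essentially the same route as the paper: invoke \Cref{lemma:template} with parameter $r$ to get $H$ (which gives \ref{absorber:3} directly), then for each $i\in[3r]$ build $Q_i$ by stringing together one fresh absorber per $v\in N_H(i)$ via length-$3$ paths from \ref{property:connecting} and padding to size $1000$. The paper implements the "fresh absorber" bookkeeping via an explicit injective function $f_v\colon N_H(v)\to[100]$, whereas you do it by a greedy "pick an unused one'' argument justified by $d_H(v)\le\Delta(H)\le 100$; these are equivalent. Your feasibility estimate on $|Z|$ matches the paper's degree calculations, and the verification of the absorption property via the $a_jb_j\mapsto a_jv_jb_j$ detour is precisely property~\ref{absorber:property1}.

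One small remark shared by both your write-up and the paper's: the padding to hit size exactly $1000$ is stated somewhat informally. Chaining applications of \ref{property:connecting} only increases the vertex count in multiples of $3$, so to land on the exact target one should note (as the minimum-degree and joinedness conditions readily allow) that the tail can be extended vertex-by-vertex rather than only in length-$3$ jumps. This is a presentational point, not a gap, and it affects the paper's argument in the same way.
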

		
        \begin{claimproof}[Proof of \Cref{clm:absorbing-paths}] 
            Let $H$ be the bipartite graph from~\Cref{lemma:template} with $r$ in the place of $n$, and let $R_1$ and $R_2$ play the role of the sets $Y$ and $Z$, respectively, and let $X=[3r]$. For every vertex $v\in R_1\cup R_2$, let $f_v:N_H(v)\to [100]$ be an injective function listing the neighbours of $v$ in $H$. Letting $Q_0$ be the empty graph, suppose that we have found $Q_0,\ldots, Q_{i-1}$ for some $i\in [3r]$ and let us show how to find $Q_i$. Let $v_1,\ldots, v_q$ be a labelling of $N_H(i)$ and note that $q\le \Delta(H)\le 100$. Using~\ref{property:connecting}, for each $1\le j<q$, find a path $Q'_{ij}$ of length $3$ between $b_{v_j,f_{v_j}(i)}$ and $a_{v_{j+1},f_{v_{j+1}}(i)}$, which is possible because
			\[d(v,V(G)\setminus\cup_{i\in [3]}R_i)\overset{\eqref{eq:triangle}}{\ge}\frac{\mu n}{4}+200|R_1\cup R_2|\ge \frac{\mu n}{8}+2\cdot 10^5|R_1\cup R_2|,\]
			provided $1/n_0\ll \alpha,\theta\ll \mu$. Then, repeatedly using~\ref{property:connecting}, find a path $Q'_{iq}$ starting from $b_{v_q,f_{v_q}(i)}$ and having suitable size so that $\cup_{j\in [q]}Q'_{i,j} \cup \{ a_{v_1,f_1(i)}b_{v_1,f_1(i)}\}$ has exactly 1000 vertices. Again, this is possible due to~\eqref{eq:triangle} and because we are looking for a path with $1000>4q$ vertices.
	\end{claimproof}

        \noindent\textit{Step 3. Finding an almost spanning $K_t$-subdivision.}
        Let $A= \cup_{i\in [3r]}V(Q_i)$ be the set of vertices covered by the absorbing paths we found in Claim~\ref{clm:absorbing-paths}. Note that $|A|=3000r$ as each path has exactly $1000$ vertices. Let $V'=V(G)\setminus (R_1\cup R_2\cup R_3\cup A)$ and pick arbitrarily our branching vertices $u_1,\ldots,u_t \in V'$. To connect the branching vertices, we will find a collection of $\binom{t}{2}$ vertex-disjoint paths covering most of $G[V']$. Indeed, as $G$ is $\varepsilon n$-joined, we can use Lemma~\ref{lemma:long-path} to find a path $P$ in $G'-\{u_1,\ldots, u_t\}$ such that 
		\[|P|\ge |V'|-t-2\varepsilon n.\]
        Divide $P$ into paths $(P'_{e})_{e\in \binom{[t]}{2}}$ so that $\big||P'_{e}|-|P'_{e'}|\big|\le 1$ for all $e,e'\in \binom{[t]}{2}$. Next, for each $e=ij\in \binom{[t]}{2}$, form a $(u_i,u_j)$-path $P_{ij}=P_e$ by using $P'_{ij}$ and connecting through $R_3$, using~\ref{property:connecting}, the endpoints of $P'_{ij}$ with $u_i$ and $u_j$. This is possible, as for all $v\in V(G)$, \ref{thm1:degree:reservoir} gives $d(v,R_3)\ge \mu \alpha n/2\ge c^2n+ 100\varepsilon n\ge t^2n+100\varepsilon n$. Thus, we have found a $K_t$-subdivision, say $S$, such that 
		
        \begin{enumerate}[$\bullet$]
            \item S is nearly-balanced, and
            \item $|V(S)\setminus \cup_{i\in [3]}R_i|\ge n-|R_1\cup R_2\cup R_3|-|A|-2\varepsilon n$, and
		\item\label{thm1:covering:2} $|V(S)\cap R_3|=4\cdot \binom{t}{2}$.

	\end{enumerate}
		
        \noindent\textit{Step 4. Finding a spanning $K_t$-subdivision.} 
        The last step of the proof consists of using the absorbing paths we built in Step 2 to turn the $K_t$-subdivision $S$ into a spanning nearly-balanced subdivision. To do so, we first need to incorporate each of the absorbing paths $(Q_i)_{i\in [3r]}$ into $S$. 

        For $1\le i\le 3r$, pick a path $P_{e}$ of minimal length and take two consecutive vertices $x,y\in V(P_e)$,  and replace the edge $xy\in E(P_e)$ with the $(x,y)$-path which consists of $Q_i$ together with two paths of length $3$ connecting the endpoints of $Q_i$ with $x$ and $y$, respectively, and whose interior vertices belong to $R_3$. This is done, again, using~\ref{thm1:degree:reservoir} and~\ref{property:connecting}, and it is possible as, for every $v\in V(G)$, 
		\[d(v,R_3\setminus V(S)){\ge} \frac{\mu |R_3|}{2}-4\cdot \binom{t}{2}\ge \frac{\mu \alpha n}{2}-2c^2n\ge \frac{\mu\alpha n}{4}+4\cdot 3r,\]
		as $1/n_0\ll c\ll  \theta\ll\alpha\ll\mu$. Thus we have found a $K_t$-subdivision, say $S'$, whose branching path lengths differ by at most $1005$. Indeed, unless $3r$  divides $t\choose2$, then some of the branching paths will contain one more absorbing path $Q_i$ than others. Hence, they may contain at most $1005$ more vertices, $1000$ for the path itself, $2$ to glue each side and one final vertex which is not in $S'$ yet but will be embedded right at the end of the proof. Moreover, $S'$ satisfies $|V(S')\setminus \cup_{i\in [3]}R_i|\ge n-|R_1\cup R_2\cup R_3|-2\varepsilon n$, $|V(S')\cap R_3|=4\cdot \binom{t}{2}+4\cdot 3r$, and
	
        \stepcounter{propcounter}
		\begin{enumerate}[label = \textbf{\Alph{propcounter}}] 
                \item\label{thm1:absorbing} each path $Q_i$ is a subpath of one of the paths from $S'$.
		\end{enumerate}
        
        The last step we need before using the absorbing paths is to include each vertex from $V(G)\setminus (V(S')\cup R_1\cup R_2)$ into the $K_t$-subdivision. To do that, we will connect through $R_2$ now. Observe that the number of vertices used from $R_3$ thus far is exactly 
        $4\cdot {t\choose2}+4\cdot3r$ and that 
        \begin{equation}\label{thm:main:ending}|R_3|=\alpha n\geq4\cdot {t\choose2}+4\cdot3r +1005\cdot {t\choose 2}+2\varepsilon n .
        \end{equation}
        Therefore, by Lemma~\ref{lemma:long-path} we may find a path $P''$ in $V(G)\setminus (V(S')\cup R_1\cup R_2)$ covering all but exactly $2\varepsilon n$ vertices. Furthermore, by~\ref{thm:main:ending} $|P''|\geq 1005 \cdot {t\choose 2}$. 

        Firstly, we embed into $S'$ those vertices in $V(G)\setminus (V(S')\cup R_1\cup R_2)$ which are not covered by $P''$. This is done using~\ref{property:connecting} to make the connections through paths of length $3$ with internal vertices in $R_2$. We do this by embedding one vertex at a time in a shortest branching path, ensuring that the lengths of the branching paths differ by at most $1005$ still. Since $|P''| \geq 1005\cdot {t\choose2}$, we may subdivide the path into $t\choose 2$ segments of appropriate lengths, taking into consideration the fact that some branching paths may contain one more absorbing path $Q_i$ than others, and embed them into the subdivision. This is once again done by using~\ref{property:connecting} and connecting by paths of length 3 with internal vertices in $R_2$. In particular, embedding the $2\varepsilon n$ vertices as well as the $t \choose 2$ segments of $P''$ is possible as, as for every every vertex $v\in V(G)$ we have
		\[d(v,R_2)\overset{\ref{thm1:degree:reservoir}}{\ge} \frac{\mu |R_2|}{2}\ge\frac{\mu\theta n}{4}+4\cdot 2\varepsilon n+4\cdot \tbinom{t}{2},\]
		and at each step, we use exactly 4 new vertices from $R_2$. Note that picking the segment lengths of $P''$ carefully ensures that at this point the branching path sizes differ by at most one. We are now left to embed the remainder of $R_1 \cup R_2$.
        
        Let $R_2'\subset R_2$ be the set of leftover vertices in $R_2$ and note that $|R_2'|=|R_2|-4\cdot{t\choose 2}-8\varepsilon n=r$. Now we are ready to use the property of the absorbing paths. By~\ref{absorber:3}, there is perfect matching, say $M$, between $[3r]$ and $R_1\cup R_2'$. For each edge $iz_i\in E(M)$, by~\ref{absorber:2} we know that $Q_i$ is an absorbing path for $z_i$ and thus, by~\ref{thm1:absorbing} and~\ref{absorber:property1}, we can include $z_i$ into the $K_t$-subdivision. Since $M$ is a perfect matching, we have included each vertex from $R_1\cup R_2'$ and thus found a spanning nearly-balanced $K_t$-subdivision.  
\end{proof}

	\section{Concluding remarks}
    As mentioned in the introduction, the result of McDiarmid and Yolov~\cite{mcdiarmid2017hamilton} gives an optimal dependency between the minimum degree of the graph and the size of the bipartite holes forcing Hamiltonicity. To state this result precisely, we need some definitions. Say a graph $G$ has a $(s,t)$-bipartite-hole if there are disjoint sets $A,B\subset V(G)$ with $|A|=s$, $|B|=t$, and no edges between $A$ and $B$. The \textit{bipartite-hole number} of $G$, which we denoted by $\tilde{\alpha}(G)$, is the smallest integer $m$ that can be written as $m=s+t-1$ for some positive integers $s$ and $t$ so that $G$ has no $(s,t)$-bipartite-hole.

\begin{theorem}[McDiarmid and Yolov~\cite{mcdiarmid2017hamilton}]\label{thm:McDiarmidYolov}If $G$ is a graph with at least 3 vertices and $\delta(G)\ge \tilde{\alpha}(G)$, then $G$ is Hamiltonian.\end{theorem}
The minimum degree condition in Theorem~\ref{thm:McDiarmidYolov} is best possible, as, for any $r\ge 1$, $G=K_{r,r+1}$ has minimum degree $\delta(G)=r$ and bipartite--hole number $\tilde{\alpha}(G)=r+1$ but is not Hamiltonian. Note that in Theorem~\ref{thm:main}, the  $\varepsilon n$-joined condition of $G$ implies that $\tilde{\alpha}(G)\le 2\varepsilon n-1$ and, from the proof of Theorem~\ref{thm:main}, we require that $\varepsilon\ll \mu$, which then implies $\tilde{\alpha}(G)\le 2\varepsilon n-1\le \delta(G)$. It would be interesting to see if Theorem~\ref{thm:McDiarmidYolov} can be extended in the context of Theorem~\ref{thm:main}.

    \begin{question}Is it true that there is a constant $c\in (0,1)$ such that for $n$ large enough the following holds. If $G$ is an $n$-vertex graph with $\delta(G)\ge \tilde{\alpha}(G)$, then $G$ contains a spanning $K_t$-subdivision for all $2\le t\le c\sqrt{n}$. 
    \end{question}

\bibliographystyle{abbrv}
\bibliography{subdivisions.bib}

\end{document}